\tikzstyle{vertex}=[circle, draw, inner sep=0pt, minimum size=6pt]
\newcommand{\vertex}{\node[vertex]}
\newtheorem{theorem}{Theorem}[section]
\patchcmd{\ttlh@hang}{\parindent\z@}{\parindent\z@\leavevmode}{}{}
\patchcmd{\ttlh@hang}{\noindent}{}{}{}
\titleformat*{\section}{\large\bfseries}
\titleformat*{\subsection}{\small\bfseries}
\titleformat*{\subsubsection}{\small\bfseries}
\titleformat*{\paragraph}{\small\bfseries}
\titleformat*{\subparagraph}{\small\bfseries}
\newcommand{\N}{\mathbb{N}}
\newcommand{\R}{\mathbb{R}}
\newcommand{\Z}{\mathbb{Z}}
\newcommand{\E}{\mathbb{E}}
\newcommand{\p}{\mathbb{P}}
\newcommand{\md}{\ensuremath{\mathrm{d}}}
\newcommand{\eps}{\varepsilon}
\newcommand{\cF}{\mathcal{F}}
\newcommand{\ce}{\mathscr{C}_{\text{eff}}}
\newtheorem{lemma}[theorem]{Lemma}
\newtheorem{remark}[theorem]{Remark}
\newtheorem{definition}[theorem]{Definition}
\newtheorem{corollary}[theorem]{Corollary}
\newtheorem{notation}[theorem]{Notation}
\begin{document}
	
\title{Recurrence and transience of symmetric random walks with long-range jumps}

\author{Johannes B\"aumler\footnote{ \textsc{Department of Mathematics, TU Munich, Germany}. E-Mail: \href{mailto:johannes.baeumler@tum.de}{johannes.baeumler@tum.de}} }

\maketitle
	
	\begin{center}
		\parbox{14cm}{ \textbf{Abstract.} Let $X_1, X_2, \ldots$ be i.i.d. random variables with values in $\mathbb{Z}^d$ satisfying $\mathbb{P} \left(X_1=x\right) = \mathbb{P} \left(X_1=-x\right) = \Theta \left(\|x\|^{-s}\right)$ for some $s>d$. We show that the random walk defined by $S_n = \sum_{k=1}^{n} X_k$ is recurrent for $d\in \{1,2\}$ and $s \geq 2d$, and transient otherwise. This also shows that for an electric network in dimension $d\in \{1,2\}$ the condition $c_{\{x,y\}} \leq C \|x-y\|^{-2d}$ implies recurrence, whereas $c_{\{x,y\}} \geq c \|x-y\|^{-s}$ for some $c>0$ and $s<2d$ implies transience. This fact was already previously known, but we give a new proof of it that uses only electric networks. We also use these results to show the recurrence of random walks on certain long-range percolation clusters. In particular, we show recurrence for several cases of the two-dimensional weight-dependent random connection model, which was previously studied by Gracar et al. [Electron. J. Probab. 27. 1-31 (2022)].}
	\end{center}

\let\thefootnote\relax\footnotetext{{\sl MSC Class}: 60G50, 05C81, 82B41, 60K35  }
\let\thefootnote\relax\footnotetext{{\sl Keywords}: Random walk,  recurrence, transience, percolation, random connection model}

\hypersetup{linkcolor=black}
\tableofcontents
\hypersetup{linkcolor=blue}

\section{Introduction and main results}

Consider independent $\Z^d$-valued random variables $X_1, X_2,...$ that are symmetric, i.e., they satisfy $\p \left(X_1=x\right) = \p \left(X_1=-x\right)$ for all $x\in \Z^d$. We want to know for which regimes of decay of $\p \left(X_i=x\right)$ the associated random walk defined by $S_n = \sum_{k=1}^{n} X_k$ is recurrent or transient. For this, we first construct an electrical network that is equivalent to this random walk. We do this by giving conductances to all edges $\{a,b\}$ with $a,b \in \Z^d$, allowing self-loops here. For two points $a,b\in \Z^d$ we give a conductance of $c_{\{a,b\}} = \p \left(X_i = a-b\right)$ to the edge between them. The symmetry condition $\p \left(X_i=x\right) = \p \left(X_i=-x\right)$ guarantees that the conductances defined like this are well-defined. Then consider the reversible Markov chain on this network, i.e., the Markov chain defined by $\p \left(M_{n+1}=y | M_n = x\right) = \frac{c_{\{x,y\}}}{\sum_{z \in \Z^d} c_{\{x,z\}}} = c_{\{x,y\}}$. The resulting Markov chain has exactly the same distribution as $S_n$, and thus, we will analyze this Markov chain from here on. We can without loss of generality assume that $\p\left(X_1=0\right)=0$, as the steps $X_i$ with $X_i=0$ have no influence whether a random walk is recurrent or transient.
It is a classical result of P{\'o}lya that the simple random walk on the integer lattice $\Z^d$ is recurrent for $d \in \{1,2\}$ and transient for $d \geq 3$ \cite{polya1921aufgabe}. Furthermore, it is a well-known result about electrical networks that transience of the random walk is equivalent to the existence of a unit flow with finite energy from $o$ to infinity, where $o$ is an arbitrary vertex in the graph, or the origin for the integer lattice; see for example \cite[Theorem 2.10]{lyons2017probability}. With this characterization of transience, one directly gets that the random walk $S_n$ defined as above is always transient for $d \geq 3$, and recurrent when the $X_i$-s are bounded symmetric random variables and $d \in \{1,2\}$. In this paper, we answer the question whether the random walk is recurrent or transient when $\p \left(X=x\right)$ has a power-law decay, i.e., when $\p \left(X=x\right) = \p \left(X=-x\right) = \Theta \left(\|x\|^{-s}\right)$, where $s>d$ is a parameter. Note that this question makes no sense for $s\leq d$, as the probabilities $\p \left(X_i = x\right)$ need to sum up to $1$. 
This problem has been studied before at several other places, for example in \cite{caputo2009recurrence} using the recurrence criterion of \cite[Section 8]{spitzer2001principles}. However, previous proofs used the characteristic function of the random walk
\begin{align*}
\varphi(\theta)= \sum_{x \in \Z^d} \p \left(X_1=x\right) e^{ix \cdot \theta},
\end{align*}
whereas our proof does not use characteristic functions, but uses the theory of electric networks.
The results of the transience/recurrence of P{\'o}lya is often humorously paraphrased as ``\textit{A drunk man will find his way home, but a drunk bird may get lost forever.}'', which goes back to Shizuo Kakutani. So in this note, we study the question which kinds of drunk grasshoppers, which tend to make huge jumps, eventually will find their way home and which kinds may get lost forever. The answer is that the random walk is recurrent for $d \in \{1,2\}$ and $s \geq 2d$, and transient otherwise.

\begin{theorem}\label{theo:transience}
	Let $X_1, X_2, \ldots$ be i.i.d. symmetric $\Z^d$-valued random variables  satisfying $\p \left(X_1=x\right) = \p \left(X_1=-x\right) \geq c \|x\|^{-s}$ for some $c>0, s<2d$, and all $x$ large enough. Then the random walk $S_n$ defined by $S_n = \sum_{k=1}^n X_k$ is transient.
\end{theorem}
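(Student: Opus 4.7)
The plan is to prove transience by exhibiting a unit flow $\theta$ from $o$ to infinity of finite energy, and then invoking the equivalence (recalled in the introduction) between transience and the existence of such a flow. The flow will spread its current uniformly across successive annular shells around $o$, with deliberate gaps between consecutive shells so that only long-range conductances appear, and the resulting energy sum converges precisely in the regime $s<2d$.

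Concretely, fix $k_0$ so large that the hypothesis $\p(X_1=x)\geq c\|x\|^{-s}$ holds whenever $\|x\|\geq 4^{k_0}$, set $A_0=\{o\}$, and define the shells
\[
A_k=\{x\in\Z^d:4^{k+k_0}<\|x\|\leq 2\cdot 4^{k+k_0}\}\qquad(k\geq 1),
\]
so that consecutive shells $A_k,A_{k+1}$ are separated by a non-empty gap annulus of comparable thickness. Let $\mu_k$ denote the uniform probability measure on $A_k$ and define
\[
\theta(x,y)=\mu_k(x)\mu_{k+1}(y)\qquad\text{whenever $x\in A_k$, $y\in A_{k+1}$,}
\]
antisymmetrized, and $\theta\equiv 0$ on every other pair (including pairs inside a single shell or involving a vertex in a gap annulus). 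Kirchhoff's law is immediate: at every $x\in A_k$ with $k\geq 1$, the inflow $\sum_{w\in A_{k-1}}\mu_{k-1}(w)\mu_k(x)$ and outflow $\mu_k(x)\sum_{z\in A_{k+1}}\mu_{k+1}(z)$ both equal $\mu_k(x)$, at gap vertices there is no flow, and the net outflow from $o$ equals $\sum_{y\in A_1}\mu_1(y)=1$. Hence $\theta$ is a unit flow from $o$ to infinity.

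For the energy bound, the crucial geometric point is that the gap forces $\|y-x\|\asymp 4^{k+k_0}$ uniformly over $x\in A_k$, $y\in A_{k+1}$, so the hypothesis gives $c_{\{x,y\}}\geq c\|y-x\|^{-s}$ and therefore $1/c_{\{x,y\}}\leq C\cdot 4^{ks}$. Since $|A_k|=\Theta(4^{kd})$ and $\mu_k\equiv 1/|A_k|$, the level-$k$ contribution to the energy is bounded by
\[
\sum_{x\in A_k,\,y\in A_{k+1}}\frac{\mu_k(x)^2\mu_{k+1}(y)^2}{c_{\{x,y\}}}\;\leq\;\frac{C\cdot 4^{ks}}{|A_k|\,|A_{k+1}|}\;=\;O\!\left(4^{k(s-2d)}\right),
\]
a geometric series which converges since $s<2d$; the $k=0$ contribution is a finite sum of finite terms because $4^{k_0}\geq R_0$. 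Thus $\mathcal E(\theta)<\infty$, so the network is transient.

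The main obstacle in this argument is establishing the uniform control $\|y-x\|\asymp 4^{k+k_0}$ between consecutive shells. Without the gap, pairs across a shared boundary could be arbitrarily close, with possibly vanishing conductance and hence infinite resistance, which would ruin the energy bound. Inserting the gap annulus into the shell definition is exactly what isolates the long-range regime of the hypothesis and turns the energy estimate into the clean geometric sum whose convergence threshold is the critical exponent $s=2d$.
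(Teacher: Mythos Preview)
Your proof is correct and follows essentially the same route as the paper: construct a unit flow that spreads uniformly over successive disjoint sets of geometrically growing cardinality, with enough separation between consecutive sets that the lower bound $c_{\{x,y\}}\geq c\|x-y\|^{-s}$ applies to every pair carrying flow, so that the level-$k$ energy is $O(b^{k(s-2d)})$ and the sum converges. The only differences are cosmetic: you use concentric annular shells at scale $4^k$ while the paper uses cubes of side $2^k$ placed along a coordinate axis, and you handle the initial step by sending flow directly from $o$ over long edges to $A_1$, whereas the paper routes through an unspecified finite-energy flow inside a finite box to reach the first large cube.
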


This result is not surprising, as for $s<2d$ the total conductance between the two boxes $A=\{0,\ldots,n\}^d$ and $B=2n\cdot e_1 + \{0,\ldots,n\}^d$ satisfies $\sum_{x \in A} \sum_{y\in B} c_{\{x,y\}} \approx  n^{2d-s} \gg 1$ and this suggests that it is possible to construct a finite-energy flow from the root to infinity. Here $e_1$ denotes the standard unit vector pointing in the direction of the first coordinate axis. This suggests that the transition from transience to recurrence in dimension $d\in \{1,2\}$ happens at $s=2d$. Note that for dimension $d\geq 3$ there is no such transition in $s$, as the symmetric random walk is transient for all values of $s>d$. Also many different properties of the long-range percolation graph change at the value $s=2d$; see \cite{baumler2022behavior,baumler2022distances} for more examples of such phenomena. What happens at the critical value $s=2d$ is treated in the following theorem.

\begin{theorem}\label{theo:recurrence}
	Let $d\in \{1,2\}$, and let $X_1, X_2, \ldots$ be i.i.d. symmetric $\Z^d$-valued random variables satisfying $\p \left(X_1=x\right) = \p \left(X_1=-x\right) \leq C \|x\|^{-2d}$ for some constant $C<\infty$ and all $x \neq 0$. Then the random walk $S_n$ defined by $S_n = \sum_{k=1}^n X_k$ is recurrent.
\end{theorem}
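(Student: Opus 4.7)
My plan is to prove recurrence via the variational characterization of effective conductance: since the network is recurrent iff $C_{\mathrm{eff}}(o\leftrightarrow B_n^c)\to 0$ as $n\to\infty$, and $C_{\mathrm{eff}}(o\leftrightarrow B_n^c)=\inf\{\cE(h):h(o)=1,\ h\equiv 0\text{ on }B_n^c\}$, where $B_n=\{x\in\Z^d:\|x\|\leq n\}$, it suffices to exhibit test functions $h_n$ with vanishing Dirichlet energy. Motivated by the fact that recurrence is only marginal at $s=2d$, I would take the double-logarithmic cutoff
\[
h_n(x)\ =\ \max\!\left(0,\ 1-\frac{\log\log(\|x\|\vee e)}{\log\log n}\right),
\]
which equals $1$ at the origin, vanishes outside $B_n$, and transitions slowly over multiple scales at rate $(\log\log n)^{-1}$.

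To bound $\cE(h_n)\leq C\sum_{x\neq y}\|x-y\|^{-2d}(h_n(x)-h_n(y))^2$ using the hypothesis, I would partition pairs $(x,y)$ into three regimes according to $\|y\|/\|x\|$. First, for \emph{comparable scales} $\|y\|/\|x\|\in[1/2,2]$, a mean-value bound yields $|h_n(x)-h_n(y)|\lesssim \|x-y\|/(\|x\|\log\|x\|\cdot\log\log n)$; summing over $y$ at each distance $r\in[1,\|x\|/2]$ from $x$ and then over $x$ using the polar/annular decomposition gives $O((\log\log n)^{-1})$ for $d=2$ (the dominant contribution) and $O((\log\log n)^{-2})$ for $d=1$. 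Second, for \emph{different scales inside the box} $\|y\|/\|x\|\in[2^k,2^{k+1}]$ with $\|y\|\leq n$, the bound $|h_n(x)-h_n(y)|\lesssim \min(k,\log\|x\|)/(\log\|x\|\cdot\log\log n)$ combined with the fact that the dyadic annulus contains $O(2^{kd}\|x\|^d)$ vertices and conductance bounded by $(2^k\|x\|)^{-2d}$ produces, after geometric summation in $k$, a contribution of $O((\log\log n)^{-2})$. Third, for \emph{boundary crossings} $\|y\|>n\geq\|x\|$, one has $(h_n(x)-h_n(y))^2=h_n(x)^2$, and one needs both the tail estimate $\sum_{\|y\|>n}c_{\{x,y\}}\lesssim(n-\|x\|)^{-d}$ and the sharp pointwise bound $h_n(x)\lesssim (n-\|x\|)/(n\log n\cdot\log\log n)$ valid near $\partial B_n$; together they give a boundary contribution of $O((\log n\cdot\log\log n)^{-2})$. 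Summing the three regimes yields $\cE(h_n)=O((\log\log n)^{-1})\to 0$, which completes the argument.

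The main obstacle is the third regime. A naive bound using $\sum_{\|y\|>n}c_{\{x,y\}}\leq 1$ together with $h_n\leq 1$ yields a contribution of order $c(\partial_E B_n)\asymp n$ (for $d=2$) or $\log n$ (for $d=1$), which does not vanish. The resolution is to exploit that $h_n$ is very small precisely where the conductance tail from $\|x\|\leq n$ to $\|y\|>n$ is large, so that the two estimates conspire to decay; this delicate balance between the pointwise smallness of $h_n$ near the boundary and the boundary conductance is the point at which the critical exponent $s=2d$ enters, and verifying it rigorously---especially handling the intermediate range $n/2<\|x\|<n$ where the trivial bound $\sum_y c_{\{x,y\}}\leq 1$ is nearly attained---will require the most care.
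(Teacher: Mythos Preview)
Your approach is essentially correct and is genuinely different from the paper's proof. The paper treats the two dimensions by entirely separate methods: for $d=1$ it couples the walk to a sum of i.i.d.\ Cauchy variables, uses the stability of the Cauchy law to get $\p(|S_n|\le 3n)\ge 1/2$, and then a Cauchy--Schwarz convolution argument to conclude $\p(S_n=0)\gtrsim n^{-1}$. For $d=2$ it runs a hierarchical construction that replaces each long edge by a carefully chosen nearest-neighbour path, producing a random nearest-neighbour network whose edge conductances have Cauchy tails, and then invokes Berger's lemma (the Nash--Williams/cutset argument for Cauchy-tailed i.i.d.\ conductances). Your Dirichlet-energy computation with the $\log\log$ test function is more direct, handles $d=1$ and $d=2$ uniformly, and avoids both the probabilistic coupling and the combinatorial path construction. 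The price is that it does not yield any quantitative information beyond $\mathscr{C}_{\mathrm{eff}}(o\leftrightarrow B_n^c)=O((\log\log n)^{-1})$, whereas the paper's $d=1$ argument gives the sharp return-probability asymptotic. For the downstream applications in the paper (percolation clusters, random connection model) only recurrence is needed, so your argument would serve equally well there.

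One correction: your stated bound in regime~2,
\[
|h_n(x)-h_n(y)|\ \lesssim\ \frac{\min(k,\log\|x\|)}{\log\|x\|\cdot\log\log n},
\]
is not valid when $k\gg\log\|x\|$. In that range the actual difference is $\frac{\log(1+k\log 2/\log\|x\|)}{\log\log n}$, which for small $\|x\|$ and $\|y\|$ close to $n$ is of order~$1$, not $(\log\log n)^{-1}$. The repair is harmless: use the exact expression $|h_n(x)-h_n(y)|=\frac{1}{\log\log n}\log\bigl(1+\tfrac{k\log 2}{\log\|x\|}\bigr)$ and observe that $\sum_{k\ge 1}2^{-kd}\bigl(\log(1+k/\log R)\bigr)^2=O((\log R)^{-2})$, because the tail $k>\log R$ contributes only $O(2^{-d\log R}(\log\log R)^2)$. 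The regime~2 total remains $O((\log\log n)^{-2})$ and the overall bound $\cE(h_n)=O((\log\log n)^{-1})$ stands. Your regime~3 analysis is the delicate part and is correct as outlined; the key is indeed the conjunction of $h_n(x)^2\lesssim \bigl(\tfrac{n-\|x\|}{n\log n\,\log\log n}\bigr)^2$ near the boundary with the tail estimate $\sum_{\|y\|>n}c_{\{x,y\}}\lesssim (n-\|x\|)^{-d}\wedge 1$, together with the observation that for $\|x\|\le n^{1/2}$ the trivial bound $h_n\le 1$ combined with $\sum_{\|y\|>n}c_{\{x,y\}}\lesssim n^{-d}$ already gives $o(1)$, while on $n^{1/2}\le\|x\|\le n/2$ one has $h_n(x)\lesssim(\log\log n)^{-1}$.
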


So in particular Theorem \ref{theo:recurrence} shows that for dimension $d\in \{1,2\}$ and for $\p\left(X_1=x\right) = c \|x\|^{-2d}$ the associated random walk is recurrent, without having a mean in dimension $1$, respectively a finite variance in dimension $2$. Both cases lie on the exact borderline that separates the transient regime from the recurrent regime. 
The transience or recurrence of a Markov chain, or of a sum of i.i.d. random variables, is an elementary question that has been extensively studied in many different regimes \cite{chung2008distribution,shepp1962symmetric,shepp1964recurrent}, including results in random environments \cite{sznitman2001class} and on percolation clusters \cite{angel2006transience,berger2002transience,hutchcroft2022transience,pemantle1996graphs}. We also use parts of the techniques developed by Berger in \cite{berger2002transience}, in particular Lemma \ref{lem:noam}.

The random walk $(X_n)_{n \in \N}$ can also be seen to be equivalent to an annealed random walk on a sequence of long-range percolation graphs when the underlying graph of the percolation gets resampled at every time-step. If one does not do this resampling, then one has a simple random walk on a percolation cluster. It is a natural question to ask how the random walk on a graph with long jumps compares to the simple random walk on the associated graph obtained by percolation. Formally, let $G=(V,E)$ be a connected graph with weighted edges $\left(c_e\right)_{e \in E} \in \R_{\geq 0}^E$. Assume that for each vertex $v \in V$ one has $0 < \sum_{e : v \in e} c_e < \infty$, and let $\left(X_n\right)_{n\in \N}$ be the random walk defined by the transition probabilities
\begin{align}\label{eq:x transition}
	\p \left(X_{n+1}=x | X_n = y\right) = \frac{c_{\{x,y\}}}{\sum_{e : y \in e} c_e}
\end{align}
for all edges $\{x,y\}\in E$. If the random walk $(X_n)_{n \in \N}$ is recurrent almost surely for all possible starting points, we also say that the graph $G=(V,E)$ is recurrent.
Let $\tilde{G} = \left(V, E, \omega \right)$ be a random graph with vertex set $V$, where each edge $e \in E$ has a random non-negative weight $\omega(e)$ that satisfies $\E \left[\omega(e)\right] \leq c_e$. Note that we do {\sl not} require that these random weights are independent for different edges. In the case where $\omega(e)\in \{0,1\}$ almost surely for all edges $e\in E$, one can also think of bond percolation on the graph $(V,E)$. Let $\left(Y_n\right)_{n\in \N}$ be the random walk on this weighted graph, i.e., the random walk with transition probabilities
\begin{align}\label{eq:y transition}
\p \left(Y_{n+1}=x | Y_n = y\right) = \frac{ \omega(\{x,y\}) }{\sum_{e : y \in e} \omega(e)}
\end{align}
for all vertices $y\in V$ and all vertices $x \in V$ for which $\omega(\{x,y\}) >0$. In the case where $\sum_{e : y \in e} \omega(e) = 0$, i.e., when all edges with $y$ as one of its endpoints have a weight of $0$, we simply define $Y_n$ as the random walk that stays constant on $y$.
For two vertices $x,y \in V$ we say that they are connected if there exists a path of edges between them, such that $\omega(e)>0$ for all edges $e$ in this path.
The graph $\tilde{G}$ will not be connected for many examples of percolation, but we say that it is recurrent if all its connected components are recurrent graphs. 
We prove that if the random walk with the long-range steps $(X_n)_{n\in \N}$ is recurrent, then almost every realization of the corresponding random weighted graph is also recurrent.

\begin{theorem}\label{theo:rw}
	Let $G=(V,E)$ be a graph with weighted edges $\left(c_e\right)_{e \in E} \in \R_{\geq 0}^E$ as above. Assume that the random walk $\left(X_n\right)_{n\in \N}$ defined by \eqref{eq:x transition} is recurrent. Let $\tilde{G} = (V,E,\omega)$ be a graph, where the edges $e\in E$ carry a random weight $\omega(e)$ with
	\begin{align*}
		\E \left[ \omega(e) \right] \leq c_e
	\end{align*}
	for all $e \in E$. Then the random walk on these weights defined by \eqref{eq:y transition} is recurrent almost surely.
\end{theorem}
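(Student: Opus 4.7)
The cleanest route is via the variational (Dirichlet) characterization of effective conductance, dual to the finite-energy-flow criterion mentioned in the introduction. Recurrence of $\tilde{G}$ from a vertex $o \in V$ is equivalent to $\ce^\omega(o \leftrightarrow \infty) = 0$, where $\ce^\omega$ denotes effective conductance in the random network $\tilde{G}$. The plan is to show $\E\bigl[\ce^\omega(o \leftrightarrow \infty)\bigr] = 0$, which then forces $\ce^\omega(o \leftrightarrow \infty) = 0$ almost surely, and a union bound over the countable set $V$ gives recurrence of every component of $\tilde{G}$.

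Fix a finite set $B \ni o$ and any function $f : V \to \R$ with $f(o) = 1$ and $f \equiv 0$ on $V \setminus B$. The Dirichlet principle applied to $\tilde{G}$ gives
\begin{align*}
\ce^\omega(o \leftrightarrow V \setminus B) \leq \sum_{\{x,y\} \in E} \omega(\{x,y\})\bigl(f(x)-f(y)\bigr)^2.
\end{align*}
Taking expectations, interchanging sum and expectation, and using $\E[\omega(e)] \leq c_e$ yields
\begin{align*}
\E\bigl[\ce^\omega(o \leftrightarrow V \setminus B)\bigr] \leq \sum_{\{x,y\} \in E} c_{\{x,y\}}\bigl(f(x)-f(y)\bigr)^2.
\end{align*}
Minimizing the right-hand side over $f$ gives, by the Dirichlet principle in $G$,
\begin{align*}
\E\bigl[\ce^\omega(o \leftrightarrow V \setminus B)\bigr] \leq \ce(o \leftrightarrow V \setminus B),
\end{align*}
where $\ce$ (without the $\omega$) denotes effective conductance in the deterministic network $G$.

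Now let $B$ exhaust $V$. The right-hand side tends to $0$ by the assumed recurrence of $G$, while on the left $\ce^\omega(o \leftrightarrow V \setminus B) \downarrow \ce^\omega(o \leftrightarrow \infty)$ monotonically. Monotone convergence gives $\E\bigl[\ce^\omega(o \leftrightarrow \infty)\bigr] = 0$, hence $\ce^\omega(o \leftrightarrow \infty) = 0$ almost surely. A countable union bound over $o \in V$ finishes the proof.

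Note that no independence of the weights $\omega(e)$ is used; only linearity of expectation together with the per-edge mean bound $\E[\omega(e)] \leq c_e$. The argument is essentially just an exchange of minimization and expectation, made possible by the mean assumption. I do not anticipate any real obstacle; the minor technicalities are the validity of the Dirichlet principle on the random network (immediate, since edges with $\omega(e)=0$ contribute nothing to either the Dirichlet energy or the effective conductance) and measurability of $\ce^\omega(o \leftrightarrow V \setminus B)$, which follows from writing it as the minimum of a quadratic form in finitely many variables with coefficients that are measurable in $\omega$. Equivalently, one could phrase the argument on the flow side: average a random finite-energy unit flow in $\tilde{G}$ (existing on the transience event) to produce a finite-energy unit flow in $G$ via Cauchy--Schwarz, contradicting the recurrence of $G$.
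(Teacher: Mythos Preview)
Your proposal is correct and follows essentially the same approach as the paper: the paper proves the key inequality $\E[\ce(A\leftrightarrow B;\omega)]\leq \ce(A\leftrightarrow B)$ via Dirichlet's principle (exchanging infimum and expectation, then using $\E[\omega(e)]\leq c_e$ edge by edge), and then applies it with $A=\{a\}$ and $B$ the complement of a finite set exhausting $V$ to conclude $\ce(a\leftrightarrow\infty;\omega)=0$ almost surely. The only cosmetic difference is that the paper phrases the limit as an $\eps$-argument rather than monotone convergence, and records the conductance comparison as a separate lemma.
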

\noindent The proof of this theorem will be a direct consequence of Lemma \ref{lem:conductances inequality}. In section \ref{subsec:rw on perco} below we will use Theorem \ref{theo:recurrence} and Theorem \ref{theo:rw} in order to extend the results on recurrence of random walks of percolation clusters of Berger \cite{berger2002transience} to percolation clusters on the one- or two-dimensional integer lattice with dependencies, i.e., when the occupation statuses of different edges are not independent. We will also apply this extension to the weight-dependent random connection model and obtain several new results regarding the recurrence of random walks on such models. Readers interested mostly in the new results regarding recurrence of the random connection model might also consider to skip section \ref{sec:randwalk longrange} directly go to section \ref{subsec:rw on perco}. It is also completely self-contained, up to the use of Theorem \ref{theo:recurrence}.

Random walks on long-range models are a well-studied object, including results on mixing times \cite{benjamini2008long} and scaling limits \cite{biskup2021quenched, crawford2012simple, crawford2013simple}. However, many results so far focused on independent long-range percolation or needed assumptions on ergodicity.
One model of dependent percolation for which the recurrence and transience has been studied recently is the {\sl weight dependent random connection model} \cite{gracar2022recurrence}.
We consider the weight dependent random connection model in dimension $d=2$. The vertex set of this graph is a Poisson process of unit intensity on $\R^2 \times (0,1)$. For a vertex $(x,s)$ in the Poisson process, the value $x\in \R^2$ is called the spatial parameter and the value $s\in (0,1)$ is called the weight parameter. Two vertices $(x,s)$ and $(y,t)$ are connected with probability $\varphi\left((x,s),(y,t)\right)$, where $\varphi : \left(\R^2 \times (0,1)\right)^2 \rightarrow \left[0,1\right]$ is a function. We will always assume that $\varphi$ is of the form
\begin{align*}
\varphi\left((x,s),(y,t)\right) = \rho\left(g(s,t) \|x-y\|^2\right)
\end{align*}
where $\rho$ is a function (also called profile function) from $\R_{\geq 0}$ to $\left[0,1\right]$ that is non-increasing and satisfies
\begin{align}\label{eq:profile fct}
\lim_{r\to \infty} r^\delta \rho(r) = 1
\end{align}
for some $\delta>1$. The function $g:(0,1)\times(0,1) \to \R_{\geq 0}$ is a kernel that is symmetric and non-decreasing in both arguments. We define different kernels depending on two parameters $\gamma \in \left[0,1\right)$ and $\beta > 0$. The parameter $\gamma$ determines the strength of the influence of the weight parameter. The parameter $\beta$ corresponds to the density of edges. Different examples of kernels are the {\sl sum kernel}
\begin{align*}
g(s,t) = g^{\text{sum}}(s,t) = \frac{1}{\beta} \left(s^{-\gamma/d}+t^{-\gamma/d}\right)^{-d},
\end{align*}
the {\sl min kernel}
\begin{align*}
g(s,t) = g^{\text{min}}(s,t) = \frac{1}{\beta} \left(\min(s,t)\right)^\gamma,
\end{align*}
the {\sl product kernel}
\begin{align*}
g(s,t) = g^{\text{prod}}(s,t) = \frac{1}{\beta} s^\gamma t^\gamma,
\end{align*}
and the {\sl preferential attachment kernel}
\begin{align*}
g(s,t) = g^{\text{pa}}(s,t) = \frac{1}{\beta} \min(s,t)^\gamma \max(s,t)^{1-\gamma}.
\end{align*}
We call the resulting graph $\mathcal{G}^\beta$.
As $g^{\text{sum}} \leq g^{\text{min}} \leq 2^d g^{\text{sum}}$, the min kernel and the sum kernel show typically the same qualitative behavior.
Depending on the value of $\beta$, there might be an infinite connected cluster \cite{gracar2021percolation, gracar2022finiteness}; Using the almost sure local finiteness of the graph and Kolmogorov's 0-1-law one sees that the existence of an infinite open cluster is a tail event. Thus we can define the critical value $\beta_c$ as the infimum over all values $\beta \geq 0$ for which an infinite open cluster exists in the graph exists, i.e.,
\begin{equation*}
	\beta_c \coloneqq \inf \left\{\beta \geq 0 : \p \left(\exists \text{ infinite open cluster in }  \mathcal{G}^\beta \right) = 1 \right\}.
\end{equation*}
The weight-dependent random connection model and other models with scale-free degree distribution have been studied intensively in recent years, including new results on the convergence of such graphs 
\cite{gracar2019age, jacob2015spatial,grauer2021preferential}, the chemical distances \cite{gracar2022chemical,jorritsma2020weighted, deijfen2013scale,hirsch2020distances}, random walks and the contact process evolving on random graphs \cite{gracar2022contact,gracar2022recurrence,heydenreich2017structures}, and the percolation phase transitions \cite{gracar2022finiteness,gracar2021percolation, deijfen2013scale, heydenreich2019lace}.
In section \ref{subsec:rcm} below we study for which combinations of $\gamma$ and $\delta$ all connected components of the resulting graph are almost surely recurrent. Our main (and only) tool for this is a consequence of Theorem \ref{theo:rw}, which allows to make statements about random walks on dependent percolation clusters. Whenever there is no infinite cluster, then the random walk is clearly recurrent on all finite clusters. The question of recurrence and transience has been studied before by Gracar, Heydenreich, M{\"o}nch, and M{\"o}rters in \cite{gracar2022recurrence}.  We will generally adapt to their notation. After this paper was first submitted, M{\"o}nch made further progress on the transient regimes, provided an infinite cluster exists \cite{monch2023inhomogeneous}[Theorem 2.7]. Among other things, M{\"o}nch proved that for
\begin{align*}
	\delta_{\text{eff}} \coloneqq - \lim_{r \to \infty} \frac{\log \left(\int_{r^{-2}}^{1} \int_{r^{-2}}^{1} \varphi\left(g(s,t)r^2\right) \md t \md s \right)}{\log(r^2)} < 2
\end{align*}
the random walk on the infinite open subgraph is transient, provided such an infinite open subgraph exists. The parameter $\delta_{\text{eff}}$ was first introduced by Gracar, L{\"u}chtrath, and M{\"o}nch in \cite{gracar2022finiteness} and is conjectured to determine many qualitative properties of the long-range percolation graph. They also determined which for which kernels $g$ and for which values of $\delta$ and $\gamma$ the condition $\delta_{\text{eff}} < 2$ is satisfied \cite{gracar2022finiteness}[Lemma 1.3]. Whenever $\delta < 2$, then also $\delta_{\text{eff}} < 2$. For the min kernel, the sum kernel, and the preferential attachment kernel one has $\delta_{\text{eff}} < 2$ if the conditions $\delta \geq 2$ and $\gamma > \tfrac{\delta}{\delta-1}$ are satisfied. For the product kernel one has $\delta_{\text{eff}} < 2$ if $\delta \geq 2$ and $\gamma > \tfrac{1}{2}$. Combining the results of \cite{gracar2022recurrence} and \cite{monch2023inhomogeneous}, the following results are known so far.

\begin{theorem}[Gracar, Heydenreich, M{\"o}nch, M{\"o}rters \cite{gracar2022recurrence} and M{\"o}nch \cite{monch2023inhomogeneous}.]
	Consider the weight-dependent random connection model with profile function $\rho$ satisfying \eqref{eq:profile fct} in dimension $d=2$, and assume $\beta > \beta_c$.
	\begin{itemize}
		\item[(a)] For the preferential attachment kernel, the infinite component is almost surely
		
		 \begin{itemize}
		 	\item[$\bullet$] transient if $\delta<2$ or $ \gamma> \frac{\delta-1}{\delta}$;
		 	\item[$\bullet$] recurrent in $d=2$ if $\delta > 2$ and $\gamma < \tfrac{1}{3}$.
		 \end{itemize}
		
		\item[(b)] For the min kernel and the sum kernel, the infinite component is almost surely
		
		\begin{itemize}
			\item[$\bullet$]  transient if $\delta<2$ or $ \gamma> \frac{\delta-1}{\delta}$;
			\item[$\bullet$]  recurrent in $d=2$ if $\delta > 2$ and $\gamma < \tfrac{1}{2}$.
		\end{itemize}

		\item[(c)] For the product kernel, the infinite component is almost surely
		
		\begin{itemize}
			\item[$\bullet$] transient if $\delta < 2$ or $\gamma > \tfrac{1}{2}$;
			\item[$\bullet$] recurrent in $d=2$ if $\delta=2, \gamma< \frac{1}{2}$.
		\end{itemize}
	\end{itemize}
\end{theorem}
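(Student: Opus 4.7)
The plan is to derive each recurrence statement by combining Theorem~\ref{theo:recurrence} with Theorem~\ref{theo:rw}, and to obtain each transience statement from an explicit finite-energy flow construction in the spirit of \cite{berger2002transience, monch2023inhomogeneous}.

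For the recurrence direction, the first step is to reduce the continuum random connection model on $\R^2 \times (0,1)$ to a weighted graph on $\Z^2$. I tile $\R^2$ into unit cells $B_z = z + [0,1)^2$ for $z \in \Z^2$, identify all Poisson points lying in $B_z$ with a single super-vertex, and assign to the edge $\{z,w\}$ the deterministic weight
\begin{equation*}
c_{\{z,w\}} = \int_{B_z}\int_{B_w}\int_0^1\int_0^1 \rho\bigl(g(s,t)\|x-y\|^2\bigr) \, ds \, dt \, dx \, dy,
\end{equation*}
which by the Campbell--Mecke formula equals the expected number of edges of $\cG^\beta$ between the two cells. The target is the bound $c_{\{z,w\}} \leq C \|z-w\|^{-4}$ for $\|z-w\|$ large; once this is established, Theorem~\ref{theo:recurrence} yields a recurrent random walk on $\Z^2$ with these conductances, and Theorem~\ref{theo:rw} transports recurrence to almost every realization of $\cG^\beta$, hence to every infinite component.

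The key estimate is obtained by replacing $\|x-y\|$ with $\|z-w\|$ (they differ by $O(1)$) and analysing $\int_0^1 \int_0^1 \rho(g(s,t)\|z-w\|^2)\, ds\, dt$. Splitting the domain of $(s,t)$ according to whether $g(s,t)\|z-w\|^2$ lies above or below $1$, and using $\rho \leq 1$ below and $\rho(r) \leq C r^{-\delta}$ above, one recognises this as precisely the quantity whose decay exponent is $\delta_{\text{eff}}$. The hypotheses on $\gamma$ and $\delta$ in each part of the theorem are exactly those that force $\delta_{\text{eff}} \geq 2$ by \cite[Lemma 1.3]{gracar2022finiteness}, which produces the required $\|z-w\|^{-4}$ decay: the product kernel case reduces to the integrability of $s^{-\gamma\delta}t^{-\gamma\delta}$, the sum and min kernels to a near-diagonal estimate for $\int \int \min(s,t)^{-\gamma\delta}\, ds\, dt$ with a weight-dependent cutoff, and the preferential attachment kernel to a more delicate integral in $\min(s,t)^\gamma \max(s,t)^{1-\gamma}$ that converges only under the stricter constraint $\gamma < 1/3$.

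For the transience statements the same formula yields a matching lower bound $c_{\{z,w\}} \geq c\|z-w\|^{-s^{\ast}}$ with $s^{\ast} < 4$ whenever $\delta_{\text{eff}} < 2$, but this annealed bound by itself does not give transience of the random cluster. Here I would follow \cite{monch2023inhomogeneous} and construct a unit flow hierarchically on dyadic boxes, exploiting the abundance of long edges guaranteed by $\delta_{\text{eff}} < 2$ to dilute the flow at each scale and verifying finite energy via the pointwise lower bound on conductances. The main obstacle, and the step that makes the transience direction strictly harder than the recurrence direction, is showing that the quenched cluster actually contains enough of these long edges to support such a flow; this calls for a coarse-graining argument with second-moment estimates to control the Poisson fluctuations and the dependence among edges sharing a common weight parameter.
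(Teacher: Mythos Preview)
This theorem is not proved in the paper at all; it is quoted as a summary of prior work from \cite{gracar2022recurrence} and \cite{monch2023inhomogeneous}, and the paper's own contribution is the strictly stronger Theorem~\ref{theo:rcm1}. So there is no ``paper's proof'' to compare your proposal against.

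On the substance of your proposal: your recurrence argument---discretize to $\Z^2$, bound the expected edge weight by $C\|z-w\|^{-4}$, then invoke Theorem~\ref{theo:recurrence} and Theorem~\ref{theo:rw}---is precisely the method the paper uses to prove its own Theorem~\ref{theo:rcm1} (via Corollary~\ref{coro:recurr} and Lemma~\ref{lem:rcm1}). But because you are using the sharp $\|z-w\|^{-4}$ criterion of this paper rather than the $\|z-w\|^{-\alpha}$, $\alpha>4$, version from \cite[Lemma~4.1]{gracar2022recurrence}, your approach yields \emph{more} than the cited theorem claims. In particular, your assertion that the preferential attachment integral ``converges only under the stricter constraint $\gamma<1/3$'' is wrong: the proof of Theorem~\ref{theo:rcm1}(a) shows that the discretized weight is $O(\|z-w\|^{-4})$ whenever $\gamma<\tfrac12$ and $\delta>2$. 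The restriction $\gamma<\tfrac13$ in \cite{gracar2022recurrence} came from needing strictly faster than fourth-power decay, which your setup does not require. Your related claim that the recurrence hypotheses ``are exactly those that force $\delta_{\text{eff}}\geq 2$'' is also incorrect: for the preferential attachment, min, and sum kernels the curve $\delta_{\text{eff}}=2$ sits at $\gamma=(\delta-1)/\delta$, well above $\tfrac13$ or $\tfrac12$ for large $\delta$, and the gap between this curve and the proven recurrent region is still open (the striped region in Figure~\ref{fig:rcm}).

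For the transience direction you offer only a sketch deferring to \cite{monch2023inhomogeneous}; since the paper does not prove that direction either, there is nothing to compare, but note that the annealed lower bound you mention is indeed insufficient and the quenched construction is the entire content of that reference.
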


An overview of their results and our newly obtained results can be found in Figure  \ref{fig:rcm}. Our results for the weight-dependent random connection model are as follows.

\begin{center}
	\begin{figure}[h]
		
		\begin{subfigure}{0.5\textwidth}
			\begin{tikzpicture}[x=4cm, y=2cm,domain=0:1,smooth]
			\vertex[ draw=none ] at (0.25,2.5) {recurrent};
			\vertex[ draw=none ] at (0.45,1.5) {transient};
			\vertex[ draw=none ] at (0.75,2.3) {\tiny$\gamma= \frac{\delta-1}{\delta}$};
			\draw[->,thick] (0,1) -- (1.2,1) node[right] {$\gamma$};
			\draw[->,thick] (0,1) -- (0,3.2) node[above] {$\delta$};
			\foreach \c in {1,2}{
				\draw (-.02,\c) -- (.02,\c) node[left=4pt] {$\c$};
			}
			\foreach \c in {0,1}{
				\draw (\c,1-.02) -- (\c,1.02) node[below=4pt] {\c};
			}
			\draw (1/2,1-.02) -- (1/2,1.02) node[below=4pt] {1/2};
			
			\draw[gray, dotted] (1,1) -- (1,3.2);
			
			\path[dotted, gray] (0.5,1) edge (1/2,2);
			\path[thick] (0.5,3.2) edge (1/2,2);
			\path[thick] (0,2) edge (1/2,2);
			
			\draw[scale=1, domain=1:2, smooth, variable=\y, dotted, gray] plot ({(\y-1)/(\y)}, {\y});
			\draw[scale=1, domain=2:3.2, smooth, variable=\y] plot ({(\y-1)/(\y)}, {\y});

			\fill [red,opacity=0.4] (1/3,2) rectangle (1/2,3.2);
			
			\fill[pattern=north west lines, pattern color=gray] (.5,3.2) to (.5,2) to [out=65, in = -103] (2.2/3.2,3.2) to (.5,3.2);

			\end{tikzpicture} 
			\subcaption{Preferential attachment kernel}
		\end{subfigure}
		\hfil
		\begin{subfigure}{0.5\textwidth}
			\begin{tikzpicture}[x=4cm, y=2cm,domain=0:1,smooth]
			\vertex[ draw=none ] at (0.25,2.5) {recurrent};
			\vertex[ draw=none ] at (0.45,1.5) {transient};
			\vertex[ draw=none ] at (0.75,2.3) {\tiny$\gamma= \frac{\delta-1}{\delta}$};
			\draw[->,thick] (0,1) -- (1.2,1) node[right] {$\gamma$};
			\draw[->,thick] (0,1) -- (0,3.2) node[above] {$\delta$};
			\foreach \c in {1,2}{
				\draw (-.02,\c) -- (.02,\c) node[left=4pt] {$\c$};
			}
			\foreach \c in {0,1}{
				\draw (\c,1-.02) -- (\c,1.02) node[below=4pt] {\c};
			}
			\draw (1/2,1-.02) -- (1/2,1.02) node[below=4pt] {1/2};
			
			\draw[gray, dotted] (1,1) -- (1,3.2);
			
			\vertex[ color=black,minimum size = 3 pt, fill=white ] (A) at (1/2,2) {};
			
			\path[dotted, gray] (0.5,1) edge (A);
			\path[very thick, red] (0.5,3.2) edge (A);
			\path[red, very thick] (0,2) edge (A);
			
			\draw[scale=1, domain=1:2, smooth, variable=\y, dotted, gray] plot ({(\y-1)/(\y)}, {\y});
			\draw[scale=1, domain=2:3.2, smooth, variable=\y] plot ({(\y-1)/(\y)}, {\y});
			
			\fill[pattern=north west lines, pattern color=gray] (.5,3.2) to (.5,2) to [out=65, in = -103] (2.2/3.2,3.2) to (.5,3.2);
			
			\vertex[ color=black,minimum size = 3 pt, fill=white ] (A) at (1/2,2) {};
			
			\end{tikzpicture} 
			\subcaption{Min and sum kernel}
		\end{subfigure}
		\hfil
		\begin{subfigure}{0.3\textwidth}
			\begin{tikzpicture}[x=4cm, y=2cm,domain=0:1,smooth]
			\vertex[ draw=none ] at (0.25,2.5) {recurrent};
			\vertex[ draw=none ] at (0.45,1.5) {transient};
			\draw[->,thick] (0,1) -- (1.2,1) node[right] {$\gamma$};
			\draw[->,thick] (0,1) -- (0,3.2) node[above] {$\delta$};
			\foreach \c in {1,2}{
				\draw (-.02,\c) -- (.02,\c) node[left=4pt] {$\c$};
			}
			\foreach \c in {0,1}{
				\draw (\c,1-.02) -- (\c,1.02) node[below=4pt] {\c};
			}
			\draw (1/2,1-.02) -- (1/2,1.02) node[below=4pt] {1/2};
			
			\draw[gray, dotted] (1,1) -- (1,3.2);
			
			\vertex[ color=black,minimum size = 3 pt ] (A) at (1/2,2) {};
			
			\path[dotted, gray] (0.5,1) edge (A);
			\path[thick] (0.5,3.2) edge (A);
			\path[red, very thick] (0,2) edge (A);
			
			\end{tikzpicture} 
			\subcaption{Product kernel}
		\end{subfigure}

		\caption{Recurrent and transient regimes for weight-dependent random connection models \cite{gracar2022recurrence,monch2023inhomogeneous}. The red lines/area is the phase where Theorem \ref{theo:rcm1} shows the recurrence of the random walk, and where the recurrence has not been shown by Gracar, Heydenreich, M{\"o}nch, and M{\"o}rters in \cite{gracar2022recurrence}. The return properties of the random walk in the striped area are still unknown. }
		
		\label{fig:rcm}
	\end{figure}
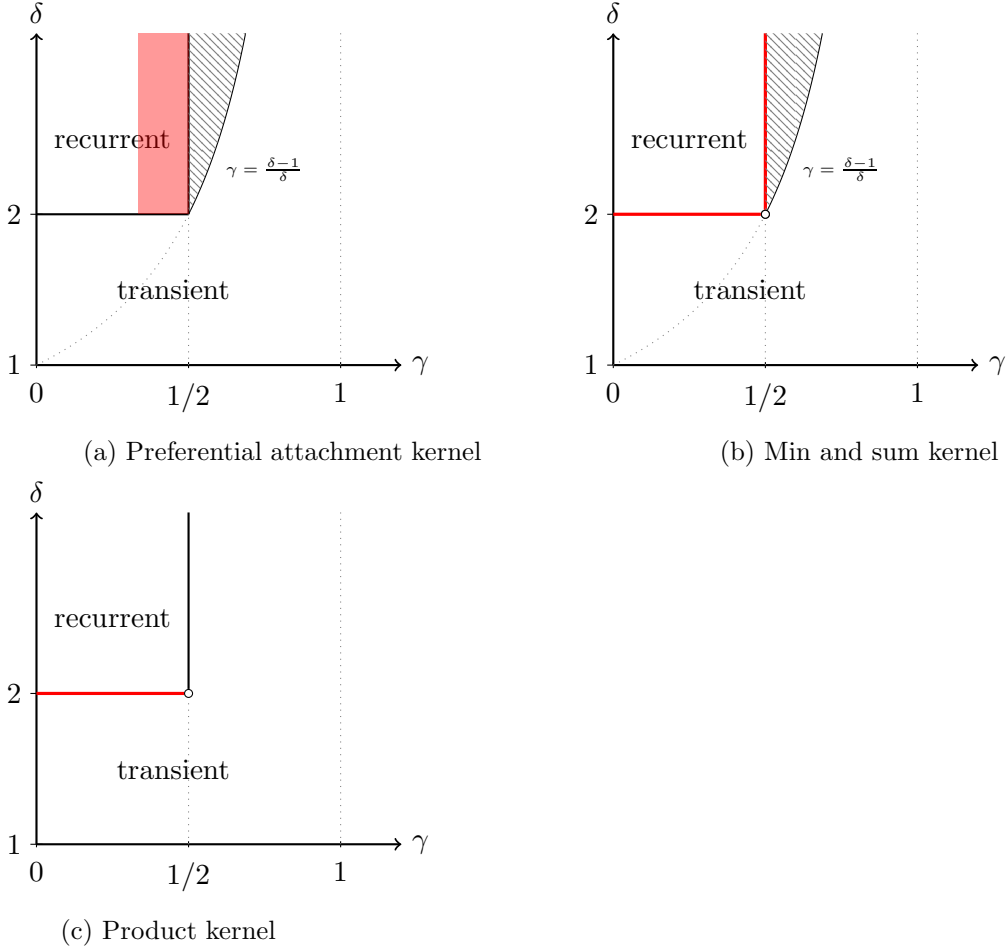
\end{center}

\begin{theorem}\label{theo:rcm1}
	Consider the weight-dependent random connection model with profile function $\rho$ satisfying \eqref{eq:profile fct} in dimension $d=2$.
	\begin{itemize}
		\item[(a)] For the preferential attachment kernel, every component is almost surely recurrent if $\delta>2, \gamma< \frac{1}{2}$.
		
		\item[(b)] For the min kernel and the sum kernel, every component is almost surely recurrent if $\delta =2, \gamma < \frac{1}{2}$ or $\delta > 2, \gamma = \frac{1}{2}$.
		
		\item[(c)] For the product kernel, every component is almost surely recurrent if $\delta=2, \gamma< \frac{1}{2}$.
	\end{itemize}
\end{theorem}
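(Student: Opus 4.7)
The proof combines a coarse-graining argument with Theorems \ref{theo:recurrence} and \ref{theo:rw}. For each $x \in \Z^2$ let $B_x = (x + [0,1)^2) \times (0,1)$, so that $\{B_x\}_{x\in\Z^2}$ partitions the mark space $\R^2 \times (0,1)$. Identifying all Poisson vertices that lie in a common box collapses $\mathcal{G}^\beta$ to a multigraph $\hat{\mathcal{G}}$ with vertex set $\Z^2$, whose edge weight $\omega(\{x,y\})$ is the number of edges of $\mathcal{G}^\beta$ running between $B_x$ and $B_y$ (with self-loops accounting for the intra-box edges). Rayleigh's monotonicity principle guarantees that short-circuiting only decreases effective resistances, so if every connected component of $\hat{\mathcal{G}}$ is almost surely recurrent, then so is every connected component of $\mathcal{G}^\beta$.

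To apply Theorem \ref{theo:rw} to $\hat{\mathcal{G}}$, I would take the deterministic reference network to be $\Z^2$ equipped with all possible edges, with conductances $c_{\{x,y\}} = C \|x-y\|^{-4}$ for $x\neq y$ and bounded self-loop conductances $c_{\{x,x\}}$. Theorem \ref{theo:recurrence} applied with $d=2$ and $s=2d=4$ shows this reference network is recurrent, so the task reduces to verifying that $\E[\omega(\{x,y\})]\leq c_{\{x,y\}}$ for $\|x-y\|$ large. By Campbell's formula for the Poisson process, writing $r = \|x-y\|$,
\begin{align*}
    \E\bigl[\omega(\{x,y\})\bigr] = \int_{B_x}\!\int_{B_y}\!\int_0^1\!\int_0^1 \rho\bigl(g(s,t)\|u-v\|^2\bigr)\,\md t\,\md s\,\md v\,\md u \asymp \int_0^1\!\int_0^1 \rho\bigl(g(s,t)\,r^2\bigr)\,\md t\,\md s,
\end{align*}
so the whole problem reduces to showing that the rightmost double integral is $O(r^{-4})$ in each of the three parameter regimes.

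Using the bound $\rho(z) \leq \min(1, C z^{-\delta})$ supplied by \eqref{eq:profile fct}, I would split the unit square by the threshold $g(s,t)r^2 = 1$ and estimate the two resulting regions kernel by kernel. For case (c) with $g^{\text{prod}} = \beta^{-1}(st)^\gamma$, the condition $\gamma\delta = 2\gamma < 1$ makes $\int_0^1\!\int_0^1 (st)^{-\gamma\delta}\,\md s\,\md t$ finite, producing $O(r^{-4})$ from the high-$g$ region, while the low-$g$ region contributes only $O(r^{-2/\gamma}\log r) = o(r^{-4})$ since $2/\gamma>4$. For case (b), the comparison $g^{\text{sum}}\asymp g^{\text{min}}$ reduces both kernels to the same computation; a change of variable $u = \beta^{-1} s^\gamma r^2$ in the inner integral then handles both the line $\delta=2,\,\gamma<\tfrac12$ (where $\gamma\delta<1$) and the endpoint $\delta>2,\,\gamma=\tfrac12$ (where $\int_0^\infty \rho(u)u^{1/\gamma-1}\,\md u$ is finite), giving $O(r^{-4})$ in each. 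For case (a), the asymmetry $g^{\text{pa}}(s,t) = \beta^{-1}\min(s,t)^\gamma\max(s,t)^{1-\gamma}$ forces one to integrate over $\{s<t\}$ and treat separately the regimes $t<\beta r^{-2}$, which already contributes $O(r^{-4})$, and $t\geq \beta r^{-2}$, which further splits depending on whether $\delta > 1/\gamma$ or $\delta < 1/\gamma$; in every sub-case the powers of $r$ conspire to produce $r^{-4}$.

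The main obstacle is precisely this case-by-case verification: the preferential attachment kernel in (a) is the most delicate because $\gamma\delta$ and $(1-\gamma)\delta$ can sit on either side of $1$, so several $t$-ranges must be balanced before the exponent $-4$ emerges, and the endpoint $\delta>2,\,\gamma=\tfrac12$ of (b) is similarly marginal. Once these three bounds are in hand, Theorems \ref{theo:recurrence} and \ref{theo:rw} together imply that $\hat{\mathcal{G}}$ is almost surely recurrent, and the Rayleigh monotonicity argument from the first paragraph then yields Theorem \ref{theo:rcm1}.
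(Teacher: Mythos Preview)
Your proposal is correct and follows essentially the same route as the paper: coarse-grain the Poisson vertices into unit boxes indexed by $\Z^2$, use Rayleigh monotonicity to reduce to the collapsed multigraph, invoke Theorems \ref{theo:recurrence} and \ref{theo:rw} (packaged in the paper as Corollary \ref{coro:recurr} and Lemma \ref{lem:rcm1}), and then verify kernel by kernel that the expected edge weight is $O(\|x-y\|^{-4})$. The only technical difference is that for the preferential attachment kernel the paper avoids your case split on $\delta$ versus $1/\gamma$ by first proving the tail estimate $\p(g(S,T)\le\eps)\le C\eps^2$ and then applying a small moment lemma (Lemma \ref{lem:small eps tail}); your direct-integration approach works equally well and yields the same $r^{-4}$ bound.
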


\textbf{Acknowledgements.} I thank Yuki Tokushige for making me aware of this problem and for many helpful comments on an earlier version of this paper. I thank Markus Heydenreich for making me aware of the applications of Theorem \ref{theo:rw} to the random-connection model. I thank Noam Berger and Christian M\"onch for useful discussions. I thank an anonymous referee for very many helpful remarks and comments. This work is supported by TopMath, the graduate program of the Elite Network of Bavaria and the graduate center of TUM Graduate School.

\section{Random walks with large steps}\label{sec:randwalk longrange}

As already discussed in the introduction, we will always study the random walk on an electric network, and this random walk has the same distribution as the sum of random variables $\sum_{k=1}^{n} X_k$. 
The electric network $\left(c_{\{x,y\}}\right)_{x,y\in \Z^d, x\neq y}$ is given through the conductances $c_{\{x,y\}} = \p \left(X_1 = x-y\right)$. Now the Markov chain on these conductances has the same distribution as $S_n = \sum_{k=1}^{n} X_k$. 
For such a Markov chain, there are well-known criteria for transience/recurrence. A random walk on this network is transient if and only if there exists a unit flow with finite energy from the origin $0$ to infinity, see for example \cite[Theorem 2.10]{lyons2017probability} or \cite{lyons1983simple,levin2010polya,doyle1984random}. We use this connection between transience and flows in the proof of Theorem \ref{theo:transience} and in the proof of Theorem \ref{theo:recurrence} for $d=2$. The use in the proof of Theorem \ref{theo:recurrence} for $d=2$ is more implicit, as it is hidden in the proof of Lemma \ref{lem:noam}. In particular, the proof of Lemma \ref{lem:noam} uses cutsets \cite{nash1959random} and the Nash-Williams criterion in order to show that there can not exist a flow with finite energy from 0 to infinity. Note that the network $\left(c_{\{x,y\}}\right)_{x,y\in \Z^d, x\neq y}$ defined as above is still translation invariant. The same statements about transience/recurrence of this network can be made without translation invariance, as the following lemma shows.

\begin{lemma}\label{lem:inher}
	For an electric network in dimension $d\in \{1,2\}$ the condition $c_{\{x,y\}} \leq C \|x-y\|^{-2d}$ implies recurrence, whereas $c_{\{x,y\}} \geq c \|x-y\|^{-s}$ for some $c>0$ and $s<2d$ implies transience.
\end{lemma}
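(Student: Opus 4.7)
The plan is to reduce the lemma to the translation-invariant statements of Theorems \ref{theo:transience} and \ref{theo:recurrence} via Rayleigh's monotonicity principle. Recall that the random walk on a conductance network is recurrent (respectively transient) if and only if the effective resistance from the origin to infinity is infinite (respectively finite), and that this effective resistance is pointwise non-increasing in the conductances; moreover, multiplying all conductances by a common positive constant does not change the induced Markov chain and hence does not affect transience or recurrence. These two facts together allow one to bracket the given (not necessarily translation-invariant) network between translation-invariant reference networks to which the earlier theorems directly apply.

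For the transience direction, suppose $c_{\{x,y\}} \geq c\|x-y\|^{-s}$ for some $c>0$ and $s<2d$. I would define the translation-invariant reference $c'_{\{x,y\}} := c\|x-y\|^{-s}$, which is pointwise dominated by $c_{\{x,y\}}$; by Rayleigh monotonicity it therefore suffices to prove transience of the network $(c'_{\{x,y\}})$. Since $s>d$, the row sum $K := \sum_{z \in \Z^d \setminus \{0\}} c\|z\|^{-s}$ is finite and independent of the starting vertex, so the Markov chain on $(c'_{\{x,y\}})$ coincides in law with the i.i.d.\ random walk with step distribution $\p(X_1 = z) = c\|z\|^{-s}/K$. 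This step distribution is symmetric and satisfies the hypothesis of Theorem \ref{theo:transience}, so the walk is transient.

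The recurrence direction is symmetric. Given $c_{\{x,y\}} \leq C\|x-y\|^{-2d}$, I would take the reference $c'_{\{x,y\}} := C\|x-y\|^{-2d} \geq c_{\{x,y\}}$, so that Rayleigh monotonicity now yields that recurrence of $(c'_{\{x,y\}})$ implies recurrence of the original network. Since $2d > d$ for $d \in \{1,2\}$, the row sums of $(c'_{\{x,y\}})$ are again finite, and the induced Markov chain is the i.i.d.\ walk with step distribution $\p(X_1 = z) \propto \|z\|^{-2d}$, which fits the hypothesis of Theorem \ref{theo:recurrence}.

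I do not expect a serious obstacle: all of the real work is already contained in the two translation-invariant theorems, and the monotonicity argument cleanly handles the loss of translation invariance by sandwiching the arbitrary network between two translation-invariant power-law reference networks. The only small point to verify is that the reference networks have finite row sums, which is immediate from $s > d$ and $2d > d$, respectively.
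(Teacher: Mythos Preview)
Your proposal is correct and follows essentially the same approach as the paper: both use Rayleigh's monotonicity principle to sandwich the given network against a translation-invariant power-law reference network, identify the random walk on the reference network with an i.i.d.\ sum, and then invoke Theorems~\ref{theo:transience} and~\ref{theo:recurrence}. The paper actually only spells out the recurrence half and says the transience direction ``works analogous and we omit it,'' so your writeup is, if anything, slightly more complete.
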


\begin{proof}[Proof of Lemma \ref{lem:inher} given Theorem \ref{theo:transience} and Theorem \ref{theo:recurrence}]
	We start with the proof of the recurrence. Let $d\in \{1,2\}$.
	We have that $$c_{\{x,y\}} \leq C \|x-y\|^{-2d} \eqqcolon \tilde{c}_{\{x,y\}}.$$ Thus, using Rayleigh's monotonicity principle \cite[Chapter 2.4]{lyons2017probability}, it suffices to show that the network defined through the conductances $\left(\tilde{c}_{\{x,y\}}\right)_{x,y\in \Z^d, x\neq y}$ is recurrent. Define $\lambda \coloneqq \sum_{x\in \Z^d \setminus\{0\}} C\|x\|^{-2d} = \sum_{x\in \Z^d \setminus\{0\}} \tilde{c}_{\{0,x\}}$. Let $X_1, X_2, \ldots$ be i.i.d. random variables with $\p\left(X_1 = x\right) = \lambda^{-1} C\|x\|^{-2d}$ for $x \in \Z^d \setminus \{0\}$. Such random variable exists as $$\sum_{x\in \Z^d \setminus\{0\}} \lambda^{-1} C\|x\|^{-2d}=1$$ by the definition of $\lambda$. Then the random walk $S_n = \sum_{k=1}^n X_k$ has exactly the same distribution as a random walk started at $0$ on the network defined by the conductances $\left(\tilde{c}_{\{x,y\}}\right)_{x,y}$. Together with Theorem \ref{theo:recurrence} this shows that the random walk on the network defined by $\left(\tilde{c}_{\{x,y\}}\right)_{x,y}$ is recurrent and, as argued before, this also shows that the random walk on the network defined by $\left(c_{\{x,y\}}\right)_{x,y}$ is recurrent. The proof of the transience for the case where $c_{\{x,y\}} \geq c \|x-y\|^{-s}$ for some $c>0$ and $s<2d$ works analogous and we omit it.
\end{proof}

After seeing the connection between the electrical networks and the random walk $S_n = \sum_{k=1}^{n} X_k$, we are ready to go to the proof of Theorem \ref{theo:transience}. 

\subsection{The proof of Theorem \ref{theo:transience}}

\begin{proof}[Proof of Theorem \ref{theo:transience}]
	We iteratively define disjoint boxes $A_0, A_1,\ldots$ as follows. Let $a_0=b_0=0$ and define $a_k$ and $b_k$ iteratively by $a_{k+1} = b_k+2^{k+1},$ and $ b_{k+1}=b_k+2\cdot2^{k+1} - 1 = a_{k+1}+ 2^{k+1}-1$. Then define the box $A_k \coloneqq \{a_k,\ldots, b_k\}\times \{0,\ldots, 2^k-1\}^{d-1}$. The resulting sets $A_k$ are disjoint for different $k$, and they are boxes of side length $2^k$, thus containing $2^{kd}$ elements. We now construct a flow between the different boxes as follows. For $k$ large enough, say for $k\geq K$, we have $c_{\{x,y\}} \geq c \|x-y\|^{-s} \geq c^\prime 2^{-ks}$ for all $x\in A_k, y \in A_{k+1}$, where $c^\prime$ is a constant that does not depend on $k$. So we consider the flow that starts uniformly distributed over $A_k$ and each  node $x \in A_k$ distributes its incoming flow uniformly to $A_{k+1}$, i.e., it sends a flow of strength $\frac{1}{|A_k|}\frac{1}{|A_{k+1}|}$ to each node $y\in A_{k+1}$. The incoming flow in $A_{k+1}$ is again uniformly distributed over the box. As we only get good upper bounds on the energy of the flow for $k\geq K$, we send a different initial flow to $A_K$. For this, we simply consider a unit flow $0$ to $A_K$ that distributes uniformly over $A_K$, i.e., each vertex in $A_K$ receives a flow of $\frac{1}{|A_K|}$, and all edges used by this unit flow are in a finite box. Concatenating the described flows clearly gives a unit flow $\theta$ from $0$ to infinity, from which we now want to estimate the energy. We are only interested in whether its energy is finite or infinite, and thus it suffices to consider the energy that is generated by the flows between $A_k$ and $A_{k+1}$ for large enough $k$. For one pair of boxes $A_k, A_{k+1}$ with $k\geq K$ there exist constants $C, C^\prime < \infty$ such that
	\begin{align*}
		& \sum_{x \in A_k} \sum_{y \in A_{k+1}} \frac{\theta(x,y)^2}{c_{\{x,y\}}}
		\leq
		\sum_{x \in A_k} \sum_{y \in A_{k+1}} \frac{\left(|A_k|\cdot|A_{k+1}|\right)^{-2}}{c \|x-y\|^{-s}} \\
		&
		\leq 
		\sum_{x \in A_k} \sum_{y \in A_{k+1}} C 2^{-4kd}2^{ks}
		\leq C^\prime 2^{-2kd}2^{ks} = C^\prime 2^{k(s-2d)}.
	\end{align*}
	Using that $s<2d$ we can now see that
	\begin{align*}
		\sum_{k=K}^{\infty} \sum_{x \in A_k} \sum_{y \in A_{k+1}} \frac{\theta(x,y)^2}{c_{\{x,y\}}}
		\leq
		\sum_{k=K}^{\infty}	C^\prime 2^{k(s-2d)} < \infty
	\end{align*}
	which shows that $\theta$ is a flow of finite energy and thus shows the transience of the random walk.
\end{proof}

\subsection{The proof of Theorem \ref{theo:recurrence} for $d=1$}

\begin{proof}[Proof of Theorem \ref{theo:recurrence} for $d=1$]
	The main strategy of this proof is to compare the discrete random walk to the sum of independent Cauchy random variables. We assumed that $c_{\{x,y\}}\leq C\|x-y\|^{-2}$ for $x,y \in \Z$. First, we define different weights $\tilde{c}_{\{x,y\}}$ as follows. For  $y \neq 0$ we define $\tilde{c}_{\{0,y\}} = \int_{|y|-1}^{|y|} \frac{1}{1+s^2} \md s $. For $x \neq 0$, we define $\tilde{c}_{\{x,y\}}$ accordingly by translation, i.e., $$\tilde{c}_{\{x,y\}} = \tilde{c}_{\{0,y-x\}} = \int_{|y-x|-1}^{|y-x|} \frac{1}{1+s^2} \md s.$$ As we started with the assumption $c_{\{x,y\}}\leq C\|x-y\|^{-2}$, we also have that $c_{\{x,y\}} \leq \lambda \tilde{c}_{\{x,y\}}$ for a constant $\lambda$ large enough and all $x\neq y$. Thus, by Rayleigh's monotonicity principle \cite[Chapter 2.4]{lyons2017probability}, it suffices to show that the network defined by the conductances $\left(\lambda\tilde{c}_{\{x,y\}}\right)_{x,y\in \Z, x \neq y}$ is recurrent. Multiplying every conductance by a constant factor does not change whether the network is recurrent or transient, and thus it suffices to show that the network defined by the conductances $\left(\tilde{c}_{\{x,y\}}\right)_{x,y\in \Z,x\neq y}$ is recurrent. For this, let $Y_1,Y_2,\ldots$ be i.i.d. Cauchy-random variables and define $X_k^\prime = \text{sgn}(Y_k) \lceil |Y_k| \rceil$. Then $X_k^\prime$ has the distribution of one step of the random walk on the network defined by $\left(\tilde{c}_{\{x,y\}}\right)_{x,y\in \Z,x\neq y}$, and by independence $S_n^\prime = \sum_{k=1}^{n} X_k^\prime$ has exactly the same distribution as the random walk on the network defined by $\left(\tilde{c}_{\{x,y\}}\right)_{x,y\in \Z}$. Furthermore, we define $R_k = Y_k - X_k^\prime$. Clearly, $R_1, R_2, \ldots$ are i.i.d. random variables that are bounded by $1$ and thus we have that
	\begin{equation}\label{eq:bound 1}
		\left|\sum_{k=1}^{n} R_k \right| \leq n\text.
	\end{equation}
	By the stableness of the Cauchy-distribution we furthermore have that
	\begin{equation}\label{eq:bound 2}
	\p\left(\left|\sum_{k=1}^{n} Y_k \right| > 2 n \right) 
	=
	\p\left(\left| Y_1 \right| > 2  \right) = 2 \int_{2}^{\infty} \frac{1}{\pi(1+s^2)} \md s \leq \int_{2}^{\infty} \frac{1}{s^2} \md s = \frac{1}{2}\text.
	\end{equation}
	Now remember that $S_n^\prime = \sum_{k=1}^{n} X_k^\prime = \sum_{k=1}^{n} Y_k - \sum_{k=1}^{n} R_k$. Combining \eqref{eq:bound 1} and \eqref{eq:bound 2} gives
	\begin{align*}
		&\p \left( \left|\sum_{k=1}^{n} X_k^\prime \right| \leq 3n \right)
		=
		1- \p \left( \left|\sum_{k=1}^{n} X_k^\prime \right| > 3n \right) \\
	 &\geq 1 - \p \left( \left|\sum_{k=1}^{n} R_k \right| > n \right) - \p \left( \left|\sum_{k=1}^{n} Y_k^\prime \right| > 2n \right) \geq 1-0-0.5=0.5 \text.
	\end{align*}
	Thus, there needs to exist a point $x \in \{-3n, \ldots, 3n\}$ with
	\begin{align*}
		\p \left(\sum_{k=1}^{n} X_k^\prime = x\right) \geq \frac{0.5}{|\{-3n, \ldots, 3n\}|} = \frac{0.5}{6 n + 1}  \text.
	\end{align*}
	However, for $n$ even, the $x \in \Z$ that maximizes $\p \left(\sum_{k=1}^{n} X_k^\prime = x\right)$ is $0$. To see this, let $\rho$ be the probability mass function of $\sum_{k=1}^{n/2} X_k^\prime$, i.e.,  $\rho(j) = \p \left(\sum_{k=1}^{n/2} X_k^\prime=j\right)$. Using the symmetry of $\rho$ (which is inherited from the symmetry of $X_i^\prime$) and a convolution, we see that
	\begin{align*}
		\p \left(\sum_{k=1}^{n} X_k^\prime = x\right)
		& = \sum_{k\in \Z} \rho (k) \rho(x-k) \leq 
		\sqrt{ \sum_{k\in \Z} \rho(k)^2} \sqrt{ \sum_{k\in \Z} \rho(x-k)^2}
		=
		\sum_{k\in \Z} \rho(k)^2\\
		&
		=
		\sum_{k\in \Z} \rho(k) \rho(-k) 
		=
		\p \left(\sum_{k=1}^{n} X_k^\prime = 0\right) 
	\end{align*}
	where we used the Cauchy-Schwarz inequality for the inequality. So in particular, for $n$ even, we have that 
	\begin{align*}
	\p \left(\sum_{k=1}^{n} X_k^\prime = 0\right) \geq \frac{0.5}{6 n + 1}\text.
	\end{align*}
	Summing this over all even $n$ we get that $\sum_{n=1}^{\infty} \p \left(\sum_{k=1}^{n} X_k^\prime = 0\right) = \infty$, which implies the recurrence of the random walk $S_n^\prime = \sum_{k=1}^{n} X_k^\prime$. As discussed above, this already implies the recurrence of the random walk $S_n$.
\end{proof}

\subsection{The proof of Theorem \ref{theo:recurrence} for $d=2$}

The proof of Theorem \ref{theo:recurrence} for $d=2$ is a direct consequence of Lemma \ref{lem:conductivity} and Lemma \ref{lem:cauchy} below. But before going to these, we need to introduce several intermediary statements. The first one, Lemma \ref{lem:noam}, is taken from \cite[Theorem 3.9]{berger2002transience}. It has the slight modification that we  want  the distribution to be the same for all edges with a fixed orientation only, whereas \cite[Theorem 3.9]{berger2002transience} does not take into account different orientations (The precise definition of orientation is given in Notation \ref{not:direction} below). However, the exact same proof as in \cite{berger2002transience} also works in our situation and we omit it. We say that a distribution $\mu$ {\sl has a Cauchy tail} if there exists a constant $C$ such that
\begin{align}\label{eq:Cauchy tail}
	\mu\left(\left[Ct,\infty\right)\right) \leq C t^{-1} \text{ for all } t > 0 \text.
\end{align}
Note that in order to determine whether a distribution $\mu$ has a Cauchy tail, it suffices to check that condition \eqref{eq:Cauchy tail} holds for all numbers $t$ of the form $C^\prime \cdot 3^j$ with a constant $C^\prime \in \R_{>0}$, instead of all $t> 0$. Our arguments will mostly use the symmetry of the nearest-neighbor bonds with respect to the $\infty$-norm. Therefore, we will always mean edges $\{x,y\}$ with $\|x-y\|_\infty=1$ when speaking of nearest-neighbor or short-range edges in the following.

\begin{lemma}\label{lem:noam}
	Let $G$ be a random electrical network on the nearest-neighbor edges
	of the lattice $\Z^2$, i.e., the edges $\{\{x,y\} : \|x-y\|_\infty = 1\}$.
	Suppose that all the edges with the same orientation have the same conductance distribution, and this distribution has a Cauchy tail. Then almost all realizations of this random graph $G$ are recurrent graphs.
\end{lemma}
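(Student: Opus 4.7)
The plan is to use the Nash--Williams cutset criterion, which is the natural recurrence criterion when the graph is essentially the nearest-neighbor lattice $\Z^2$. I would take the annular cutsets
\[
\Pi_n = \bigl\{\{x,y\} : \|x-y\|_\infty = 1,\ \|x\|_\infty = n,\ \|y\|_\infty = n+1\bigr\},
\]
which are pairwise disjoint, each separate the origin from infinity, and each have cardinality $|\Pi_n| = \Theta(n)$. Under these choices, recurrence of $G$ follows once one shows that
\[
\sum_{n \geq 1} \frac{1}{C_n} = \infty \text{ almost surely, where } C_n := \sum_{e \in \Pi_n} c_e.
\]

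The core estimate is that $C_n = O(n \log n)$ with probability bounded away from zero, uniformly in $n$. The Cauchy tail \eqref{eq:Cauchy tail} gives
\[
\E\bigl[\min(c_e, T)\bigr] \leq 1 + C \log T \text{ for all } T \geq 1,
\]
together with the tail bound $\p(c_e \geq T) \leq C/T$. Splitting $\Pi_n$ according to the finitely many possible edge orientations, each of which carries a fixed marginal by hypothesis, I would apply a union bound with $T = T_n = n^{10}$ to see that with probability $1 - O(n^{-9})$ every $c_e$ in $\Pi_n$ is already bounded by $T_n$; Markov applied to the truncated sum then yields $\p(C_n \leq K n \log n) \geq \tfrac12$ for a suitable constant $K$.

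The last step is to upgrade this uniform lower bound into an almost-sure divergence of $\sum 1/C_n$. Since the $\Pi_n$ use pairwise disjoint edges, the random variables $C_n$ are independent under the natural assumption that the conductances themselves are independent across edges (or at worst $k$-dependent after a bounded renormalisation, which is the setting considered in \cite{berger2002transience}). Under such independence, a blockwise Chebyshev argument on the dyadic scales $n \in [2^k, 2^{k+1})$ shows that a positive fraction of indices in each block satisfy $C_n \leq K n \log n$ almost surely, and since $\sum_{n \in [2^k, 2^{k+1})} 1/(n \log n) \asymp 1/k$ with $\sum_k 1/k = \infty$, this gives the divergence required by Nash--Williams.

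The main obstacle is that the $\log n$ factor produced by the Cauchy tail exactly matches the divergence rate of $\sum 1/(n \log n) = \infty$, so the argument has essentially no slack: one cannot afford to lose even a $\log \log$ factor, either in the concentration estimate for $C_n$ or in the handling of dependence between different annuli. This tightness is what forces the critical tail exponent in dimension two to be exactly $s = 2d$ in Theorems~\ref{theo:transience} and \ref{theo:recurrence}, and it is the reason one must work at the cutset level (exploiting $\sum_e c_e$) rather than at the edge-by-edge level.
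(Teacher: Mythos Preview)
Your overall strategy---Nash--Williams with the concentric annular cutsets $\Pi_n$, truncation to get $\E[c_e\wedge T]=O(\log T)$, and then Markov to obtain $\p(C_n\le Kn\log n)\ge\tfrac12$---is exactly the route taken in Berger's proof of \cite[Theorem~3.9]{berger2002transience}, which the paper cites and whose proof it omits. The first two steps are fine.

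The genuine gap is in your third step. You write that ``the random variables $C_n$ are independent under the natural assumption that the conductances themselves are independent across edges''. But the lemma makes no independence assumption: it only fixes the \emph{marginal} law of each edge (per orientation). More importantly, in the sole application of this lemma in the paper (Lemma~\ref{lem:cauchy}), the conductances $U(e)=W(e)+\sum_k U_k(e)$ are highly dependent: for each $k$ the entire field $(U_k(e))_e$ is a deterministic function of the single random shift $r_k\in\{0,\dots,3^k-1\}^2$, so $U_k(e)$ and $U_k(e')$ are measurable with respect to the same one random variable for every pair of edges. There is no finite-range dependence here, and your ``blockwise Chebyshev argument on the dyadic scales'' needs a variance bound on $\sum_{n\in[2^k,2^{k+1})}\mathbf{1}\{C_n\le Kn\log n\}$, which you simply do not have from marginal information alone. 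The upgrade from ``$\p(C_n\le Kn\log n)\ge\tfrac12$ for each $n$'' to ``$\sum_n 1/C_n=\infty$ almost surely'' is therefore not justified as written.

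This is exactly the delicate point of the proof---as you yourself observe, the $\log n$ coming from the Cauchy tail matches the divergence rate of $\sum 1/(n\log n)$ with no room to spare---and it cannot be repaired by inserting an independence hypothesis that the lemma does not make and that is false in the intended application. You need to revisit Berger's actual argument to see how the almost-sure statement is extracted without any joint-law assumption on the conductances.
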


Before going to the formal details of the proof of Theorem \ref{theo:recurrence}, we want to explain the main ideas behind it. Assume that $c_{\{x,y\}}$ are conductances on $\Z^2$ with $c_{\{x,y\}} = \|x-y\|^{-4}$. If one has two disjoint boxes $A,B$ of side length $3^k$ and with distance approximately $3^k$, then one has $c_{\{x,y\}}\approx 3^{-4k}$ for all $x\in A$ and $ y\in B$. An edge of conductance $3^{-4k}$ is equivalent to $N$ edges in series with conductance $N\cdot 3^{-4k}$ each, where $N$ is an arbitrary positive integer. In our construction, $N$ will be of order $3^k$. So the rough idea is to replace each edge $\{x,y\}$ with $\Theta \left(3^{k}\right)$ many edges of conductance $\Theta \left(3^{-3k}\right)$. By the parallel law, the conductivity of the network further increases if we erase these $\Theta \left(3^{k}\right)$ many edges in series of conductance $\Theta \left(3^{-3k}\right)$, and increase the conductances along a path $\gamma^k_{x,y}$ of length $\Theta \left(3^{k}\right)$ in the nearest-neighbor lattice by $\Theta \left(3^{-3k}\right)$. However, we will not do this independently for all $x\in A, y\in B$, but we want that for different points $x,x^\prime \in A$ and $y,y^\prime \in B$ the paths $\gamma_{x,y}^k$ and $\gamma_{x^\prime,y^\prime}^k$ have an overlap that is relatively big. So far, we only looked at fixed $k\in \N$. We will do such a construction for all $k\in \N$. But at each $k$, we will also look at random, $3^k$-periodic shifts of the plane. We use these uniform random shifts so that the distribution of the final conductance is the same for all edges of the same orientation. This construction will then lead to Cauchy tails for the individual conductances of the edges in the nearest-neighbor lattice, and thus, using Lemma \ref{lem:noam}, to the recurrence of the random walk on this network. The environment we started with is completeley deterministic, and the edge-weights arising through our construction are random just because of the random shifts of the plane. This also underlines that it is important for our construction to use random shifts, so that we can apply Lemma \ref{lem:noam}.\\

Next, we introduce some notation. We do this in order to partition the plane $\Z^2$ into boxes with side length $3^k$. The same notation was already used in \cite{baumler2022behavior,baumler2022distances}.

\begin{notation}
	For a point $x=(x_1,x_2)\in \Z^2$ and $N \in \N$ we write 
	\begin{equation*}
		V_x^{N} = Nx + \{0,\ldots, N-1\}^2 = \{x_1 N , \ldots, x_1 N + N -1\} \times \{x_2 N , \ldots, x_2 N + N -1\}
	\end{equation*}
	for the box with side length $N$ that is translated by $Nx$. So in particular $\Z^2 = \bigsqcup_{x\in \Z^2} V_x^{N}$, where the symbol $\bigsqcup$ stands for a disjoint union. For $l \in \{0,\ldots,k\}$, each box of side length $3^k$ can be written as the disjoint union of $3^{2(k-l)}$ boxes of side length $3^l$. This union is simply given by
	\begin{align*}
		V_x^{3^k} &= 3^k x + \{0,\ldots, 3^k-1\} = 3^k x + \bigsqcup_{y \in \{0,\ldots , 3^{k-l} -1\}^2} V_y^{3^l}\\
		&
		=
		\bigsqcup_{y \in V_0^{3^{k-l}}} \left(3^k x + V_y^{3^l}\right)\text.
	\end{align*}
	For each point $x\in \Z^2$, there exists for all $l\geq 0$ a unique $y=y(l,x)\in \Z^2$ with $x \in V_{y(l,x)}^{3^l}$. For a point $x\in \Z^2$, let $m_l(x)$ be the midpoint of $V_{y(l,x)}^{3^l}$, i.e.,
	\begin{align*}
		m_l(x) = 3^l y(l,x) + \frac{3^l-1}{2} \left(\begin{matrix}
		1 \\ 1
		\end{matrix}\right) .
	\end{align*}
	So in particular we have $m_0(x)=x$ for all $x \in \Z^2$. Also note that $m_l(x)$ and $m_{l+1}(x)$ can be the same point. A point $u \in \Z^2$ for which there exists a point $x \in \Z^2$ with $m_l(x)=u$ is also called a {\sl midpoint of the $l$-th level}. Note that a block $V_a^{3^k}$ contains exactly $3^{2(k-l)}$ midpoints of the $l$-th level, for all $l\in \{0,\ldots,k\}$.
\end{notation}

Edges of the form $\{x,y\}$ with $x,y \in \Z^2, \|x-y\|_\infty = 1$ can have four different orientations: $\diagdown \ , \ \diagup \ , \ \mid \ , \text{ and } \--$. For an orientation $\overset{\to}{\nu} \in \{ \diagdown , \diagup  ,  \mid ,  \--\}$, we write $E_{\overset{\to}{\nu}} \left(\Z^2\right)$ for all the short-range edges pointing in this direction in the integer lattice. We also want to make a tiling of $E_{\overset{\to}{\nu}}\left(\Z^2\right)$ with a given periodicity. We will simply decide on one tiling now. There are, of course, several other natural options, which come from a different inclusion on the boundary of the blocks $V_a^N=Na + \{0,\ldots, N-1\}^2$.

\begin{notation}\label{not:direction}
	For any $a\in \Z^2, N \in \N$, we define
	\begin{align*}
	&E_{\diagdown} \left( V_a^N \right) = \left\{\left\{ x, x+ \left( \begin{matrix}
	1 \\ -1
	\end{matrix}\right) \right\} : x \in V_a^N\right\}\text,\\
	&
	E_{\diagup} \left( V_a^N \right) = \left\{\left\{ x, x+ \left( \begin{matrix}
	1 \\ 1
	\end{matrix}\right) \right\} : x \in V_a^N\right\}\text,\\
	&
	E_{\mid} \left( V_a^N \right) = \left\{\left\{ x, x+ \left( \begin{matrix}
	0 \\ 1
	\end{matrix}\right) \right\} : x \in V_a^N\right\}\text,\\
	&
	E_{\--} \left( V_a^N \right) = \left\{\left\{ x, x+ \left( \begin{matrix}
	1 \\ 0
	\end{matrix}\right) \right\} : x \in V_a^N\right\}\text.\\
	\end{align*}
\end{notation}

\begin{figure}[t]
	\begin{center}
		
		\[\begin{tikzpicture}[xscale=0.4, yscale=0.4]
		
		\foreach \z in {-1,2,5,8}
		\draw[color=lightgray, thin] (\z+0.5,-1.5) -- (\z+0.5,9.5);
		
		\foreach \z in {-1,2,5,8}
		\draw[color=lightgray, thin] (-1.5,\z+0.5) -- (9.5,\z+0.5);
		
		\foreach \z in {-1,8}
		\draw[color=gray, thick] (\z+0.5,-1.5) -- (\z+0.5,9.5);
		
		\foreach \z in {-1,8}
		\draw[color=gray, thick] (-1.5,\z+0.5) -- (9.5,\z+0.5);
		
		\vertex[ color=gray,minimum size = 3 pt ]  at (-1,-1) {};
		\vertex[ color=gray,minimum size = 3 pt ]  at (-1,0) {};
		\vertex[ color=gray,minimum size = 3 pt ]  at (-1,1) {};
		\vertex[ color=gray,minimum size = 3 pt ]  at (-1,2) {};
		\vertex[ color=gray,minimum size = 3 pt ]  at (-1,3) {};
		\vertex[ color=gray,minimum size = 3 pt ]  at (-1,4) {};
		\vertex[ color=gray,minimum size = 3 pt ]  at (-1,5) {};
		\vertex[ color=gray,minimum size = 3 pt ]  at (-1,6) {};
		\vertex[ color=gray,minimum size = 3 pt ]  at (-1,7) {};
		\vertex[ color=gray,minimum size = 3 pt ]  at (-1,8) {};
		\vertex[ color=gray,minimum size = 3 pt ]  at (-1,9) {};
		
		\vertex[ color=gray,minimum size = 3 pt ]  at (0,-1) {};
		\vertex[ color=gray,minimum size = 3 pt ]  at (0,0) {};
		\vertex[ color=gray,minimum size = 3 pt ]  at (0,1) {};
		\vertex[ color=gray,minimum size = 3 pt ]  at (0,2) {};
		\vertex[ color=gray,minimum size = 3 pt ]  at (0,3) {};
		\vertex[ color=gray,minimum size = 3 pt ]  at (0,4) {};
		\vertex[ color=gray,minimum size = 3 pt ]  at (0,5) {};
		\vertex[ color=gray,minimum size = 3 pt ]  at (0,6) {};
		\vertex[ color=gray,minimum size = 3 pt ]  at (0,7) {};
		\vertex[ color=gray,minimum size = 3 pt ]  at (0,8) {};
		\vertex[ color=gray,minimum size = 3 pt ]  at (0,9) {};
		
		\vertex[ color=gray,minimum size = 3 pt ]  at (1,-1) {};
		\vertex[ color=gray,minimum size = 3 pt ]  at (1,0) {};
		\vertex[fill, color=black,minimum size=3 pt ] (11) at (1,1) {};
		\vertex[ color=gray,minimum size = 3 pt ]  at (1,2) {};
		\vertex[ color=gray,minimum size = 3 pt ]  at (1,3) {};
		\vertex[fill, color=black,minimum size=3 pt ] (14)  at (1,4) {};
		\vertex[ color=gray,minimum size = 3 pt ]  at (1,5) {};
		\vertex[ color=gray,minimum size = 3 pt ]  at (1,6) {};
		\vertex[fill, color=black,minimum size=3 pt ] (17)  at (1,7) {};
		\vertex[ color=gray,minimum size = 3 pt ]  at (1,8) {};
		\vertex[ color=gray,minimum size = 3 pt ]  at (1,9) {};
		
		\vertex[ color=gray,minimum size = 3 pt ]  at (2,-1) {};
		\vertex[ color=gray,minimum size = 3 pt ]  at (2,0) {};
		\vertex[ color=gray,minimum size = 3 pt ]  at (2,1) {};
		\vertex[ color=black,minimum size=3 pt ] (22)  at (2,2) {};
		\vertex[ color=gray,minimum size = 3 pt ]  at (2,3) {};
		\vertex[ color=black,minimum size=3 pt ] (24)  at (2,4) {};
		\vertex[ color=gray,minimum size = 3 pt ]  at (2,5) {};
		\vertex[ color=black,minimum size=3 pt ] (26)  at (2,6) {};
		\vertex[ color=gray,minimum size = 3 pt ]  at (2,7) {};
		\vertex[ color=gray,minimum size = 3 pt ]  at (2,8) {};
		\vertex[ color=gray,minimum size = 3 pt ]  at (2,9) {};
		
		\vertex[ color=gray,minimum size = 3 pt ]  at (3,-1) {};
		\vertex[ color=gray,minimum size = 3 pt ]  at (3,0) {};
		\vertex[ color=gray,minimum size = 3 pt ]  at (3,1) {};
		\vertex[ color=gray,minimum size = 3 pt ]  at (3,2) {};
		\vertex[ color=black,minimum size=3 pt ] (33)  at (3,3) {};
		\vertex[ color=black,minimum size=3 pt ] (34) at (3,4) {};
		\vertex[ color=black,minimum size=3 pt ] (35) at (3,5) {};
		\vertex[ color=gray,minimum size = 3 pt ]  at (3,6) {};
		\vertex[ color=gray,minimum size = 3 pt ]  at (3,7) {};
		\vertex[ color=gray,minimum size = 3 pt ]  at (3,8) {};
		\vertex[ color=gray,minimum size = 3 pt ]  at (3,9) {};
		
		\vertex[ color=gray,minimum size = 3 pt ]  at (4,-1) {};
		\vertex[ color=gray,minimum size = 3 pt ]  at (4,0) {};
		\vertex[fill, color=black,minimum size=3 pt ] (41)  at (4,1) {};
		\vertex[ color=black,minimum size=3 pt ] (42) at (4,2) {};
		\vertex[ color=black,minimum size=3 pt ] (43) at (4,3) {};
		\vertex[fill, color=black,minimum size=4 pt ] (44) at (4,4) { \ };
		\vertex[ color=black,minimum size=3 pt ] (45) at (4,5) {};
		\vertex[ color=black,minimum size=3 pt ] (46) at (4,6) {};
		\vertex[fill, color=black,minimum size=3 pt ] (47) at (4,7) {};
		\vertex[ color=gray,minimum size = 3 pt ]  at (4,8) {};
		\vertex[ color=gray,minimum size = 3 pt ]  at (4,9) {};
		
		\vertex[ color=gray,minimum size = 3 pt ]  at (5,-1) {};
		\vertex[ color=gray,minimum size = 3 pt ]  at (5,0) {};
		\vertex[ color=gray,minimum size = 3 pt ]  at (5,1) {};
		\vertex[ color=gray,minimum size = 3 pt ]  at (5,2) {};
		\vertex[ color=black,minimum size=3 pt ] (53)  at (5,3) {};
		\vertex[ color=black,minimum size=3 pt ] (54)  at (5,4) {};
		\vertex[ color=black,minimum size=3 pt ] (55) at (5,5) {};
		\vertex[ color=gray,minimum size = 3 pt ]  at (5,6) {};
		\vertex[ color=gray,minimum size = 3 pt ]  at (5,7) {};
		\vertex[ color=gray,minimum size = 3 pt ]  at (5,8) {};
		\vertex[ color=gray,minimum size = 3 pt ]  at (5,9) {};
		
		\vertex[ color=gray,minimum size = 3 pt ]  at (6,-1) {};
		\vertex[ color=gray,minimum size = 3 pt ]  at (6,0) {};
		\vertex[ color=gray,minimum size = 3 pt ]  at (6,1) {};
		\vertex[ color=black,minimum size=3 pt ] (62)  at (6,2) {};
		\vertex[ color=gray,minimum size = 3 pt ]  at (6,3) {};
		\vertex[ color=black,minimum size=3 pt ] (64)  at (6,4) {};
		\vertex[ color=gray,minimum size = 3 pt ]  at (6,5) {};
		\vertex[ color=black,minimum size=3 pt ] (66)  at (6,6) {};
		\vertex[ color=gray,minimum size = 3 pt ]  at (6,7) {};
		\vertex[ color=gray,minimum size = 3 pt ]  at (6,8) {};
		\vertex[ color=gray,minimum size = 3 pt ]  at (6,9) {};
		
		\vertex[ color=gray,minimum size = 3 pt ]  at (7,-1) {};
		\vertex[ color=gray,minimum size = 3 pt ]  at (7,0) {};
		\vertex[fill, color=black,minimum size=3 pt ] (71) at (7,1) {};
		\vertex[ color=gray,minimum size = 3 pt ]  at (7,2) {};
		\vertex[ color=gray,minimum size = 3 pt ]  at (7,3) {};
		\vertex[fill, color=black,minimum size=3 pt ] (74) at (7,4) {};
		\vertex[ color=gray,minimum size = 3 pt ]  at (7,5) {};
		\vertex[ color=gray,minimum size = 3 pt ]  at (7,6) {};
		\vertex[fill, color=black,minimum size=3 pt ] (77)  at (7,7) {};
		\vertex[ color=gray,minimum size = 3 pt ]  at (7,8) {};
		\vertex[ color=gray,minimum size = 3 pt ]  at (7,9) {};
		
		\vertex[ color=gray,minimum size = 3 pt ]  at (8,-1) {};
		\vertex[ color=gray,minimum size = 3 pt ]  at (8,0) {};
		\vertex[ color=gray,minimum size = 3 pt ]  at (8,1) {};
		\vertex[ color=gray,minimum size = 3 pt ]  at (8,2) {};
		\vertex[ color=gray,minimum size = 3 pt ]  at (8,3) {};
		\vertex[ color=gray,minimum size = 3 pt ]  at (8,4) {};
		\vertex[ color=gray,minimum size = 3 pt ]  at (8,5) {};
		\vertex[ color=gray,minimum size = 3 pt ]  at (8,6) {};
		\vertex[ color=gray,minimum size = 3 pt ]  at (8,7) {};
		\vertex[ color=gray,minimum size = 3 pt ]  at (8,8) {};
		\vertex[ color=gray,minimum size = 3 pt ]  at (8,9) {};
		
		\vertex[ color=gray,minimum size = 3 pt ]  at (9,-1) {};
		\vertex[ color=gray,minimum size = 3 pt ]  at (9,0) {};
		\vertex[ color=gray,minimum size = 3 pt ]  at (9,1) {};
		\vertex[ color=gray,minimum size = 3 pt ]  at (9,2) {};
		\vertex[ color=gray,minimum size = 3 pt ]  at (9,3) {};
		\vertex[ color=gray,minimum size = 3 pt ]  at (9,4) {};
		\vertex[ color=gray,minimum size = 3 pt ]  at (9,5) {};
		\vertex[ color=gray,minimum size = 3 pt ]  at (9,6) {};
		\vertex[ color=gray,minimum size = 3 pt ]  at (9,7) {};
		\vertex[ color=gray,minimum size = 3 pt ]  at (9,8) {};
		\vertex[ color=gray,minimum size = 3 pt ]  at (9,9) {};
		
		\path[thick, black]
		(11) edge (22) (22) edge (33) (33) edge (44) (44) edge (55) (55) edge (66)(66) edge (77)
		(14) edge (24)(24) edge (34)(34) edge (44)(44) edge (54)(54) edge (64)(64) edge (74)
		(41) edge (42)(42) edge (43)(43) edge (44)(44) edge (45)(45) edge (46)(46) edge (47)
		(17) edge (26)(26) edge (35)(35) edge (44)(44) edge (53)(53) edge (62)(62) edge (71)
		(71) edge (62)(62) edge (53)(53) edge (44)(44) edge (35)(35) edge (26)(26) edge (17);
		
		\vertex[ color=black,minimum size = 4 pt ]  at (4,4) {};
		\end{tikzpicture}\]
		
		\parbox{12cm}{ \caption{ The gray lines between the vertices indicate the partitioning of the plane. The midpoints of the first and the second level are the filled vertices. The canonical shortest paths between the $8$ midpoints of the first level and the midpoint of the second level are the $3$ bold black edges between these points. }\label{fig:canonical shortest paths}}
	\end{center}
\end{figure}
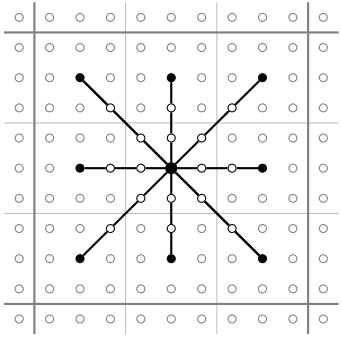

Note that for $x\in \Z^2$ and $l \in \N$, the midpoints $m_l(x)$ and $m_{l+1}(x)$ have either $0$ or $3^l$ as distance in the $\infty$-metric, i.e., $\|m_l(x)-m_{l+1}(x)\|_\infty \in \{0,3^l\}$. In the case where $\|m_l(x)-m_{l+1}(x)\|_\infty = 3^l$, there exists a path of length $3^l$ connecting $m_l(x)$ and $m_{l+1}(x)$ which uses edges $\{u,v\}$ with $\|u-v\|_\infty = 1$ only. Such a path is in general not unique, but it is unique if we make the further restriction that the path uses $3^l$ edges of the same orientation. So the resulting path, which we refer to as the {\sl canonical shortest path}, is the path that connects $m_l(x)$ and $m_{l+1}(x)$ using the straight line between these two points. Examples of canonical shortest paths are given in Figure \ref{fig:canonical shortest paths}.

Next, we define a set of paths. We want to define a path $\gamma^k_{x,y}$ for all $x,y \in \Z^2$ for which there exist $a,b \in \Z^2$  with $\|a-b\|_\infty \in \{2,\ldots, 7\}$, such that $x \in 3^k a + \{0,\ldots, 3^k-1\}^2 = V_{a}^{3^k}$ and $y \in 3^k b + \{0,\ldots, 3^k-1\}^2 = V_b^{3^k}$. The path $\gamma_{x,y}^k$ defined below is adopted to the renormalization with scale $3$, as it uses this iterative structure. Whenever $x,y$ are not of the form as described above, we simply say that the path $\gamma_{x,y}^k$ does not exist. A picture of our construction is given in Figure \ref{fig:gamma}.

\begin{definition}
	Let $a,b \in \Z^2$  with $\|a-b\|_\infty \in \{2,\ldots, 7\}$, and let $x \in 3^k a + \{0,\ldots, 3^k-1\}^2 = V_{a}^{3^k}$ and $y \in 3^k b + \{0,\ldots, 3^k-1\}^2 = V_b^{3^k}$. We define the path $\gamma_{x,y}^k$ as the path that goes from $x=m_0(x)$ to $m_1(x)$ following the canonical shortest path and from there to $m_2(x)$ following the canonical shortest path and from there, iteratively, following the canonical shortest paths, to $m_k(x)$. From there, the path goes in a deterministic way to $m_k(y)$ and from there iteratively, following the canonical shortest paths, to $m_0(y)=y$. For the path between $m_k(x)$ and $m_k(y)$ we follow the line sketched in Figure \ref{fig:ways between midpoints}.
\end{definition}

\begin{figure}[t]
	\begin{center}

			\begin{tikzpicture}[xscale=0.4, yscale=0.4]
			\foreach \y in {-7,-6,-5,-4,-3,-2,-1,0,1,2,3,4,5,6,7,8}
			\draw[color=lightgray] (-7,\y) -- (8,\y);
			
			\foreach \x in {-7,-6,-5,-4,-3,-2,-1,0,1,2,3,4,5,6,7,8}
			\draw[color=lightgray] (\x,-7) -- (\x,8);
			
			\draw[color=black, thick] (-7+0.5,0.5) -- (7+0.5,0.5);
			\draw[color=black, thick] (0.5,-7+0.5) -- (0.5,7+0.5);
			
			\foreach \z in {1,2,3,4,5,6,7}
			\draw[color=black, thick] (0.5+\z,-7+0.5) -- (0.5,-7+0.5+\z);
			
			\foreach \z in {1,2,3,4,5,6,7}
			\draw[color=black, thick] (0.5-\z,-7+0.5) -- (0.5,-7+0.5+\z);
			
			\foreach \z in {1,2,3,4,5,6,7}
			\draw[color=black, thick] (0.5+\z,7+0.5) -- (0.5,7+0.5-\z);
			
			\foreach \z in {1,2,3,4,5,6,7}
			\draw[color=black, thick] (0.5-\z,7+0.5) -- (0.5,7+0.5-\z);
			
			\foreach \z in {1,2,3,4,5,6}
			\draw[color=black, thick] (7+0.5-\z,0.5) -- (7+0.5,0.5+\z);
			
			\foreach \z in {1,2,3,4,5,6}
			\draw[color=black, thick] (7+0.5-\z,0.5) -- (7+0.5,0.5-\z);
			
			\foreach \z in {1,2,3,4,5,6}
			\draw[color=black, thick] (-7+0.5+\z,0.5) -- (-7+0.5,0.5+\z);
			
			\foreach \z in {1,2,3,4,5,6}
			\draw[color=black, thick] (-7+0.5+\z,0.5) -- (-7+0.5,0.5-\z);

			\vertex[fill,color=blue,minimum size=4 pt ]  at (0.5+0,0.5+0) {};
			
			\foreach \y in {-7,-6,-5,-4,-3,-2,-1,0,1,2,3,4,5,6,7}
			\foreach \x in {-7,-6,-5,-4,-3,-2,2,3,4,5,6,7}
			\vertex[fill,color=black,minimum size=2 pt ]  at (0.5+\x,0.5+\y) {};
			
			\foreach \y in {-7,-6,-5,-4,-3,-2,-1,0,1,2,3,4,5,6,7}
			\foreach \x in {2,3,4,5,6,7}
			\vertex[fill,color=black,minimum size=2 pt ]  at (0.5+\x,0.5+\y) {};
			
			\foreach \x in {-1,0,1}
			\foreach \y in {-7,-6,-5,-4,-3,-2,2,3,4,5,6,7}
			\vertex[fill,color=black,minimum size=2 pt ]  at (0.5+\x,0.5+\y) {};

			\end{tikzpicture}
			\vspace{6mm}

		\parbox{12cm}{ \caption{ The midpoints of boxes of side length $3^k$ are the dots. The partition of the lattice into blocks of side length $3^k$ is marked in gray. The path between the midpoint $m_k(x)$ (the blue dot) and a different midpoint $m_k(y)$ in a different box (a black dot) is obtained by following the black line. }\label{fig:ways between midpoints}}
	\end{center}
\end{figure}
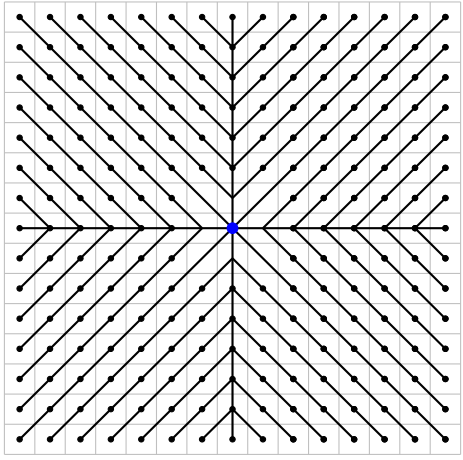

The paths of the form $\gamma_{x,y}^k$ are not simple paths or shortest paths. In particular, they can go several times over the same edge. Also note that we do {\sl not} have $\gamma_{x,y}^k=\gamma_{y,x}^k$, in general. This is because the path chosen between $m_k(x)$ and $m_k(y)$ is not necessarily the same path, see Figure \ref{fig:ways between midpoints}. However, the paths $\gamma_{x,y}^k$ can not be too long. The $\infty$-distance between the points $m_k(x)$ and $m_k(y)$ is at most $7 \cdot 3^k$, and for $l+1\leq k$ one has $\|m_l(x)-m_{l+1}(x)\|_\infty \in \{0, 3^l\}$, and the same statement also holds for $y$ instead of $x$. Writing $|\gamma^k_{x,y}|$ for the length of the path $\gamma^k_{x,y}$, we thus get that
\begin{align}\label{eq:length}
	|\gamma^k_{x,y}| \leq 7\cdot 3^k + 2 \sum_{l=0}^{k-1} 3^l \leq 10 \cdot 3^k.
\end{align}

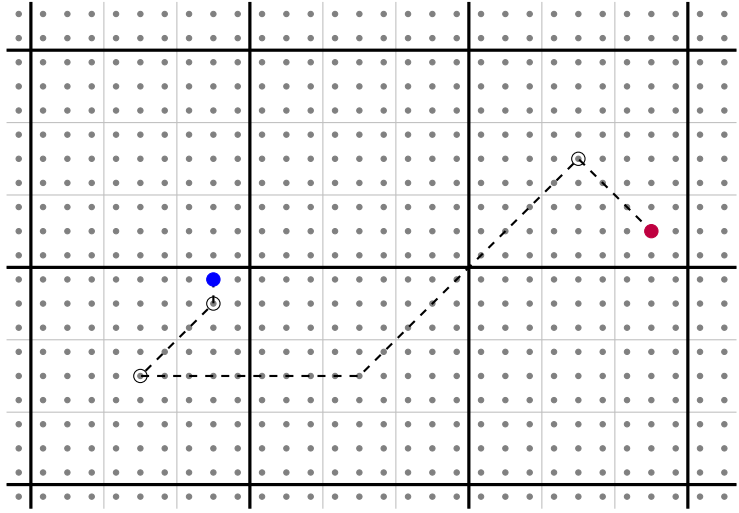
\begin{figure}[t]
	\begin{center}
		
		\begin{tikzpicture}[xscale=0.32, yscale=0.32]
		
		\foreach \x in {-1,...,28}
		\foreach \y in {-1,...,19}
		\vertex[fill,color=gray,minimum size=2 pt ]  at (0.5+\x,0.5+\y) {};
		
		\foreach \x in {0,3,6,...,27}
		\draw[color=lightgray] (\x,-1) -- (\x,20);
		
		\foreach \y in {0,3,6,...,18}
		\draw[color=lightgray] (-1,\y) -- (29,\y);
		
		\foreach \x in {0,9,18,27}
		\draw[color=black, very thick] (\x,-1) -- (\x,20);
		
		\foreach \y in {0,9,18}
 		\draw[color=black, very thick] (-1,\y) -- (29,\y);

		\draw[color=black, thick, dashed] (0.5+7,0.5+8) -- (0.5+7,0.5+7) -- (0.5+4,0.5+4) -- (0.5+13,0.5+4) -- (0.5+22,0.5+13) -- (0.5+25,0.5+10);
	
		\vertex[minimum size=5 pt ]  at (0.5+7,0.5+7) {};
		\vertex[minimum size=5 pt ]  at (0.5+4,0.5+4) {};
		
		\vertex[minimum size=5 pt ]  at (0.5+22,0.5+13) {};

		\vertex[fill,color=blue,minimum size=5 pt ]  at (0.5+7,0.5+8) {};
		\vertex[fill,color=purple,minimum size=5 pt ]  at (0.5+25,0.5+10) {};
		
		\end{tikzpicture}
		\vspace{6mm}

		\parbox{12cm}{ \caption{ The dashed line is the path $\gamma_{x,y}^2$ between the points $x$ (blue) and $y$ (red). The dots are points in $\Z^2$, the gray lines give the partition of $\Z^2$ into sets of the form $V_a^3$, and the thick black lines give the partition of $\Z^2$ into sets $V_a^{9}$. The encircled points are the points $m_1(x),m_2(x)$, and $m_2(y)$. Note that we have $y=m_0(y)=m_1(y)$ here.}\label{fig:gamma}}
	\end{center}
\end{figure}

Consider the set of paths $\gamma^k_{x,y}$ over all suitable points $x,y\in \Z^2$. We want to bound the number of edges that lie in $N$ or more paths $\gamma^k_{x,y}$.  We say that an edge $e=\{x,y\}$ is in the path $\gamma = \left(x_0,\ldots, x_n\right)$, abbreviated by $e \in \gamma$, if $(x,y)=(x_i,x_i+1)$ or $(y,x)=(x_i,x_i+1)$ for an $i \in \{0,\ldots,n-1\}$.
We first focus on the structure of the paths inside of one box $A= V_a^{3^k} =3^ka+\{0,\ldots,3^k-1\}$. For each $l\in \{0,\ldots, k\}$, there are $3^{2(k-l)}$ midpoints of the $l$-th level inside $A$, i.e., points $y \in A$ such that $y=m_l(x)$ for a point $x\in A$. Thus there are $3^{2(k-l-1)}$ midpoints of the form $m_{l+1}(x)$ in $A$. Each box of side length $3^{l+1}$ contains $9$ boxes of side length $3^l$. Thus, there are $8 \cdot 3^{l} 3^{2(k-l-1)} \leq 3^{2k -l}$ edges in $A$ that are on the canonical shortest path between two midpoints of the form $m_l(x)$ and $m_{l+1}(x)$. The factor $8$ arises, as for one box of side length $3^{l+1}$ with midpoint $z$ we only need to consider the $8=3^2-1$ boxes of side length $3^l$ that lie inside this box but do not have $z$ as a midpoint. Edges that do not lie on the canonical shortest path between two midpoints of any level are not used in the segments that connect an $x \in A$ to $m(A)$, where $m(A)$ is the midpoint of $A$. 
Furthermore, for two boxes $V_a^{3^k}$ and $V_b^{3^k}$ with $\|a-b\|_\infty \leq 7$, there are at most $7\cdot 3^k$ edges that are on the path between the midpoints of $V_a^{3^k}$ and $V_b^{3^k}$. Many of the edges in this path lie actually outside of both the boxes $V_a^{3^k}$ and $V_b^{3^k}$.\\

\begin{definition}\label{def:numbers}
	For each short-range edge $e$ we define the number $N_e^k$ by
	\begin{align*}
	N_e^k = \left|\left\{(x,y) \in \Z^2 \times \Z^2 : e \in \gamma^k_{x,y} \right\}\right|
	\end{align*}
	which is just the number of paths of the form $\gamma_{x,y}^k$ that use the edge $e$. 
	For a number $r \geq 0$ and an orientation $\overset{\to}{\nu} \in  \{ \diagdown , \diagup  ,  \mid ,  \--\}$ we define
	\begin{align*}
	X^{k,\overset{\to}{\nu}}_{\geq r} = \left| \left\{e \in E_{\overset{\to}{\nu}} \left(V_0^{3^k}\right) : N_e^k \geq r \right\} \right|
	\end{align*}
	which is the number of edges in $E_{\overset{\to}{\nu}} \left(V_0^{3^k}\right)$ that lie in at least $r$ different paths of the form $\gamma_{x,y}^k$.
\end{definition}

Remember that we defined the path $\gamma^k_{x,y}$ only for points $x,y$ satisfying $x \in V_a^{3^k}, y \in V_b^{3^k}$ for some $a,b \in \Z^2$ with $\|a-b\|_\infty \in \{2,\ldots, 7 \}$. So in particular for all edges $e$ we have that $e \notin \gamma_{x,y}^k$ for all points $x,y$ that are not of this special form. The next lemma gives upper bounds on the number of edges that lie in at least a given number of paths.

\begin{lemma}\label{lem:numbers}
	For all orientations $\overset{\to}{\nu} \in  \{ \diagdown , \diagup  ,  \mid ,  \--\}$ and all $l\leq k-1$ one has
	\begin{align}\label{eq:bound1numbers}
	X^{k,\overset{\to}{\nu}}_{\geq 50 \cdot 3^{2k+2l}} \leq 3^{2k-l} + 3^{k} \leq 3^{2k-l+1}
	\end{align}
	and furthermore, one has
	\begin{align}\label{eq:bound2numbers}
	X^{k,\overset{\to}{\nu}}_{\geq 2^{17} \cdot 3^{4k}} = 0 \text.
	\end{align}
\end{lemma}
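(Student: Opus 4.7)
The plan is to decompose each path $\gamma^k_{x,y}$ into its hierarchical pieces and count, for each short-range edge $e$ of orientation $\overset{\to}{\nu}$ in $V_0^{3^k}$, how the pairs $(x,y)$ using $e$ distribute over these pieces. Call $e$ a \emph{level-$l$ edge} if it lies on the canonical shortest path between $m_l(z)$ and $m_{l+1}(z)$ for some $z\in V_0^{3^k}$, and a \emph{top-level edge} if it lies on the deterministic path between $m_k(V_a^{3^k})$ and $m_k(V_b^{3^k})$ for some admissible pair $(a,b)$ with $\|a-b\|_\infty\in\{2,\ldots,7\}$.

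First, I would bound the contribution of each piece to $N_e^k$. A level-$l$ edge $e$ is used on the $x$-half of $\gamma^k_{x,y}$ only when $x$ belongs to the specific $3^l$-box whose midpoint-to-midpoint segment contains $e$, giving at most $3^{2l}$ choices of $x$; since the set $\{b\in\Z^2:\|b\|_\infty\in\{2,\ldots,7\}\}$ has $(2\cdot 7+1)^2-(2\cdot 1+1)^2=216$ elements, the endpoint $y$ lies in one of at most $216\cdot 3^{2k}$ points. This yields at most $216\cdot 3^{2k+2l}$ pairs on the $x$-half and similarly on the $y$-half. A top-level edge $e$ is used only for a bounded number $C_1$ of admissible pairs $(a,b)$, each of which contributes $3^{4k}$ pairs $(x,y)$, for at most $C_1\cdot 3^{4k}$ in total.

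Next, I would count the relevant edges in $V_0^{3^k}$ of the fixed orientation $\overset{\to}{\nu}$. For level $l'$, each of the $3^{2(k-l'-1)}$ sub-boxes of side length $3^{l'+1}$ contains at most $2\cdot 3^{l'}$ such level-$l'$ edges (two canonical shortest paths of orientation $\overset{\to}{\nu}$, each of length $3^{l'}$); summing over these sub-boxes and then over $l'\geq l$ yields at most $3^{2k-l}$ edges of orientation $\overset{\to}{\nu}$ in $V_0^{3^k}$ that are level-$l'$ edges for some $l'\geq l$. Analogously, the top-level edges of orientation $\overset{\to}{\nu}$ in $V_0^{3^k}$ number at most $C_2\cdot 3^k$, since only $O(1)$ admissible pairs $(a,b)$ have their deterministic top-level path meeting $V_0^{3^k}$, and each such intersection has length at most $3^k$.

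To prove \eqref{eq:bound1numbers}, I would argue by contraposition: if $e$ is neither a level-$l'$ edge for any $l'\geq l$ nor a top-level edge, then its whole contribution to $N_e^k$ comes from levels $l'<l$ and is bounded by the geometric sum $\mathrm{const}\cdot\sum_{l'=0}^{l-1}3^{2k+2l'}=O(3^{2k+2(l-1)})$, which is strictly less than $50\cdot 3^{2k+2l}$. Hence every $e$ with $N_e^k\geq 50\cdot 3^{2k+2l}$ is either a level-$l'$ edge for some $l'\geq l$ or a top-level edge, and the counts above give $X^{k,\overset{\to}{\nu}}_{\geq 50\cdot 3^{2k+2l}}\leq 3^{2k-l}+3^k\leq 3^{2k-l+1}$. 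For \eqref{eq:bound2numbers}, I would sum the contributions over all levels plus the top-level contribution for a single edge $e$, obtaining $N_e^k\leq\mathrm{const}\cdot\sum_{l'=0}^{k-1}3^{2k+2l'}+C_1\cdot 3^{4k}\leq 2^{17}\cdot 3^{4k}$ via an explicit accounting. The main obstacle is the top-level segment: I will need to bound $C_1$ and $C_2$ by a careful analysis of the deterministic ``plus-with-diagonals'' path structure depicted in Figure \ref{fig:ways between midpoints}, and to verify that the geometric-series constants from the levelwise analysis are tight enough to rule out $e$ being only a level-$l'$ edge for $l'<l$.
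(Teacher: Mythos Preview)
Your outline is essentially the paper's: classify edges by the hierarchical levels at which they occur, show that $N_e^k$ is small whenever $e$ appears only at low levels, and count the high-level and top-level edges. The one real snag is in the constants. With the per-level bound $2\cdot 216\cdot 3^{2k+2l'}$ that you state, the geometric sum evaluates to
\[
\sum_{l'=0}^{l-1}432\cdot 3^{2k+2l'}=432\cdot 3^{2k}\cdot\frac{9^{l}-1}{8}<54\cdot 3^{2k+2l},
\]
which \emph{exceeds} $50\cdot 3^{2k+2l}$; so the assertion ``which is strictly less than $50\cdot 3^{2k+2l}$'' does not follow from your bookkeeping as written. The paper avoids the sum altogether: it argues that if $e$ occurs only at levels $j\le l-1$, then the relevant endpoint ($x$, say) is confined to a single box $V_{f(e)}^{3^{l-1}}$ of side $3^{l-1}$, giving $N_e^k\le 2\cdot 216\cdot 3^{2(l-1)}\cdot 3^{2k}=48\cdot 3^{2k+2l}<50\cdot 3^{2k+2l}$ directly. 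You will need that single-box observation (or an equivalent sharpening that prevents the factor $\tfrac{9}{8}$ from accumulating) to match the specific constant $50$ in the statement. Your counts of level-$l'$ edges for $l'\ge l$ and of top-level edges of a fixed orientation are fine and agree with the paper's $3^{2k-l}+3^k$.

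For \eqref{eq:bound2numbers} the paper is actually cruder than your level-by-level accounting: it simply observes that any $\gamma_{x,y}^k$ touching $E_{\overset{\to}{\nu}}(V_0^{3^k})$ forces both $x$ and $y$ into $\bigcup_{\|a\|_\infty\le 7}V_a^{3^k}$, so $N_e^k\le\bigl((2\cdot7+1)^2\,3^{2k}\bigr)^2<2^{17}\cdot 3^{4k}$ without decomposing into levels or estimating a top-level constant $C_1$.
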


\begin{proof}
	Suppose that an edge $e$ is {\sl not} on the straight line between two midpoints of the $l$-th level and the $(l+1)$-th level in the set $V_0^{3^k}$, and also {\sl not} on the path between two midpoints $m\left(V_a^{3^k}\right)$ and $m\left(V_b^{3^k}\right)$ for $a,b \in \Z^2$ with $\|a-b\|_\infty \in \{2,\ldots, 7\}$. 
	So the edge $e$ can only be on the straight line between midpoints of the $j$-th level and the $(j+1)$-th level, for $j \leq l-1$. 
	Thus, there exists a set $V_{f(e)}^{3^{l-1}} \subset V_0^{3^k}$ such that
	$e$ can only be part of paths of the form $\gamma^k_{x,y}$ where $x \in V_{f(e)}^{3^{l-1}}$ or $y \in V_{f(e)}^{3^{l-1}}$. There are $(2\cdot 7 + 1)^2 - 9 = 216$ many $a \in \Z^2$ with $2 \leq \| a \|_\infty \leq 7$. Thus, there are at most $216 \cdot 3^{2(l-1)} 3^{2k} < 25 \cdot 3^{2k+2l}$ pairs $(x,y)$ with $x \in V_{f(e)}^{3^{l-1}}$ and $y\in \bigcup_{a \in \Z^2 : 2 \leq \| a \|_\infty \leq 7 } V_a^{3^k}$. Using symmetry between $x$ and $y$ we get that
	$N_e^k < 50 \cdot 3^{2k+2l}$. 
	
	This shows that edges $e$ with $N_e^k \geq 50 \cdot 3^{2k+2l}$ are {\sl either} on the canonical path between two midpoints of the $l$-th level and the $(l+1)$-th level in the set $V_0^{3^k}$, {\sl or} on the path between two midpoints $m\left(V_a^{3^k}\right)$ and
	 $m\left(V_b^{3^k}\right)$ for $a,b \in \Z^2$ with $\|a-b\|_\infty \in \{2,\ldots, 7\}$.
	As discussed before, in the set $V_0^{3^k}$, there are at most $3^{2k-l}$ edges that join a midpoint of the $l$-th level to a midpoint of the $(l+1)$-th level. For each orientation, there are $3^k$ edges that are used by paths between different midpoints. For the orientation $\diagup$, for example, this are simply the edges of the form $\left\{
	\left(\begin{matrix}
	s \\ s
	\end{matrix}\right)
	,  \left(\begin{matrix}
	s+1 \\ s+1
	\end{matrix}\right)\\ \right\}$ with $s \in \{0,\ldots,3^k-1\}$. Thus we have
	\begin{align}\label{eq:bound1forclaim}
		X^{k,\overset{\to}{\nu}}_{\geq 50 \cdot 3^{2k+2l}} \leq 3^{2k-l} + 3^{k} \leq 3^{2k-l+1}
	\end{align}
	which shows \eqref{eq:bound1numbers}. Note that the last inequality in \eqref{eq:bound1forclaim} holds because $l\leq k$. Furthermore, for each edge $e$ there are at most $ \left((2\cdot 7 + 1 )^2 3^{2k}\right)^2  < 2^{17} 3^{4k}$ pairs $(x,y)$ such that $\gamma_{x,y}^k$ is defined and for which $e \in \gamma_{x,y}^k$ is possible. This holds, as for every path $\gamma_{x,y}^k$ that uses one of the edges in $E_{\overset{\to}{\nu}}\left(V_0^{3^k}\right)$, say for $x \in V_a^{3^k}$ and $y\in V_b^{3^k}$, we already must have $\|a\|_\infty, \|b\|_\infty \leq 7$.
	This gives us that
	\begin{align}\label{eq:bound2forclaim}
	X^{k,\overset{\to}{\nu}}_{\geq 2^{17} \cdot 3^{4k}} = 0
	\end{align}
	which finishes the proof.
\end{proof}

We are now ready to go to the proof of the recurrence of the network. Remember that we started with conductances $c_{\{x,y\}}$ satisfying $c_{\{x,y\}}\leq C \|x-y\|_\infty^{-4}$ for a uniform constant $0<C<\infty$. For two networks $\left(c_{\{x,y\}}\right)_{x,y\in \Z^d}$ and $\left(\tilde{c}_{\{x,y\}}\right)_{x,y\in \Z^d}$ we say that the first network has a higher conductivity than the second network if the effective conductances satisfy $\mathscr{C}_{\text{eff}}(A \leftrightarrow B) \geq \tilde{\mathscr{C}}_{\text{eff}}(A \leftrightarrow B)$ for all sets $A,B \subset \Z^d$. The effective conductance between two points $a,b \in V$ is defined by
\begin{equation*}
	\mathscr{C}_{\text{eff}}(a \leftrightarrow b) =  \p_a \left(a \rightarrow b\right)  \sum_{v \in V} c_{\{a,v\}},
\end{equation*}
where $\p_a \left(a \rightarrow b\right)$ is the probability that a random walk hits $b$ before it hits $a$, when starting at $a$. So the effective conductance between $a$ and $b$ is related to how likely it is to go from $a$ to $b$.
The effective conductance between two sets $A,B$ is the conductance between the points $a,b$ if the set $A$ is contracted to a point $a$ and the set $B$ is contracted to a point $b$.
Taking $A=\{0\}$ and $B= \Z^d \setminus \{-n,\ldots,n\}^d$, and letting $n$ to $\infty$, this shows that if the network defined by $c_{\{x,y\}}$ is recurrent, then the network defined by $\tilde{c}_{\{x,y\}}$ is also recurrent.
By Rayleigh's monotonicity principle \cite[Chapter 2.4]{lyons2017probability}, the conductivity of the network increases if we increase the conductance of edges. Thus, it suffices to show that the network defined by the conductances $c_{\{x,y\}}= C \|x-y\|_\infty^{-4}$ is recurrent. However, multiplying every conductance of each edge by a constant factor does not change whether the network is recurrent or transient. Thus, we will, from now on, focus on the case where
\begin{align*}
	c_{\{x,y\}} = \frac{1}{\|x-y\|_\infty^{4}} \text{ for all } x,y \in \Z^2, x\neq y\text.
\end{align*}
Following an idea of Berger \cite{berger2002transience}, our strategy is that we erase the long edges and give a higher conductance to the short edges instead, in such a way that the total conductivity increases. The way in which this is done in \cite{berger2002transience} does not work in the situation we are dealing with. The precise way in which we do this is described in Definition \ref{def:2 to 8} for edges of length $2,3,\ldots,8$, and in Definition \ref{def:9 and higher} for edges of length $9$ and higher (where the length of an edge is measured in the $\infty$-distance of its endpoints). Some edges might appear several times, but if we increase the conductances twice for one edge, then it only increases the total conductivity of the network. Before going to these definitions, we need to introduce a bit more notation.

For a path $\gamma = (x_0,x_1,\ldots, x_n)$ and a point $r \in \Z^2$, we define the path $r+\gamma = (r+x_0, r+ x_1 , \ldots, r+x_n)$, which is now a path between $r+x_0$ and $r+x_n$. Note that for three points $x,y,r \in \Z^2$, and $k\in \N$, for which the path $\gamma_{x+r,y+r}^k$ exists, the path $-r + \gamma_{x+r,y+r}^k$ is actually a path between $x$ and $y$. Also remember that we write $E(\Z^2) = \left\{\{x,y\} \subset \Z^2 : \|x-y\|_\infty = 1\right\}$ for the edge set consisting of short edges on $\Z^2$.

\begin{definition}\label{def:2 to 8}
	For two vertices $x=(x_1,x_2)$ and $y = (y_1, y_2)$ in $\Z^2$, we define the path $\gamma^\prime_{x,y}$ as the path that goes from $x$ to $(x_1,y_2)$ using $|x_2-y_2|$ edges of the orientation $\mid$, and from there to $(y_1,y_2)$ using $|x_1-y_1|$ edges of the orientation $\--$. This path is uniquely defined and has length $\|x-y\|_1 \leq 2 \|x-y\|_\infty$.
	We now define a weight $W : E(\Z^2) \to \left[0,\infty\right)$ as follows. Start with $W \equiv 0$.
	Now, for each pair $(x,y) \in \Z^2 \times \Z^2$ with $2 \leq \|x-y\|_\infty \leq 8$, increase $W(e)$ for all edges $e \in \gamma_{x,y}^\prime$ by $16$. Define $W$ as the limiting object.
\end{definition}

\begin{definition}\label{def:9 and higher}
	We now define a weight $U_k : E(\Z^2) \to \left[0,\infty\right)$ as follows. Start with $U_k \equiv 0$.
	Choose $r_k \in \{0,\ldots, 3^k - 1\}^2$ uniformly at random. Now, for each pair $(x,y) \in \Z^2 \times \Z^2$ for which there exist $a,b\in \Z^2$ with $2 \leq \|a-b\|_\infty \leq 7$ with $x+r_k \in V_a^{3^k}, y+r_k \in V_b^{3^k}$, increase $U_k(e)$ for all edges $e \in -r_k+\gamma_{x+r_k,y+r_k}^k$ by $10\cdot 3^{-3k}$. Define $U_k$ as the limiting object.
\end{definition}

Note that $U_k$ and $W$ are well-defined and do not depend on the order of the exhaustion of $\Z^2 \times \Z^2$, as we only add a non-negative amount at every step, and never subtract anything. Next, we want to show that the nearest-neighbor network $\left(\Z^2, E(\Z^2), U\right)$ defined by $ U = W + \sum_{k=1}^{\infty} U_k$ has a higher conductivity than the original network. Note that we can define $U = W + \sum_{k=1}^{\infty} U_k$ also directly by increasing the conductances along all suitable paths $\gamma^\prime_{x,y}$ or $\gamma^k_{x,y}$ by the corresponding value and then look at the limiting object.

\begin{lemma}\label{lem:conductivity}
	The network defined by the weights $U(e)= W(e) + \sum_{k=1}^{\infty} U_k(e)$ has a higher conductivity than the network defined by the weights
	\begin{align}\label{eq:conductance in 2d}
	c_{\{x,y\}} = \frac{1}{\|x-y\|_\infty^{4}} \text{ for all } x,y \in \Z^2, x\neq y\text.
	\end{align}
\end{lemma}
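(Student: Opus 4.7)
My strategy is to rewrite each long edge of the $c$-network as a series path in the nearest-neighbor lattice, use the shorting law to glue the intermediate vertices of these paths to their lattice positions, and then dominate the resulting nearest-neighbor conductances edgewise by $U(e)$ so that Rayleigh's monotonicity principle yields the claim. Both the series law and the shorting law can only increase effective conductances, so it suffices to show that the sum of all replacement contributions on each nearest-neighbor edge is at most $U(e)$.

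For each edge $\{x,y\}$ with $\|x-y\|_\infty \in \{2,\ldots,8\}$ I replace it by the path $\gamma'_{x,y}$ of length $n_{x,y}=\|x-y\|_1 \leq 2\|x-y\|_\infty$, distributing to each of its edges an extra conductance $n_{x,y}c_{\{x,y\}}\leq 2/\|x-y\|_\infty^3 \leq 1/4$. Since Definition \ref{def:2 to 8} adds $16$ to $W(e)$ per such pair along the same path, the weight $W$ dominates the sum of these medium-range replacements with a large margin.

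For each edge $\{x,y\}$ with $\|x-y\|_\infty \geq 9$ I choose $k=k(x,y)\geq 1$ such that $2\cdot 3^k \leq \|x-y\|_\infty \leq 7\cdot 3^k$; such a $k$ exists because the intervals $[2\cdot 3^k,7\cdot 3^k]$ overlap and cover $[6,\infty)$. The bounds $\lfloor\|x-y\|_\infty/3^k\rfloor\geq 2$ and $\lceil\|x-y\|_\infty/3^k\rceil\leq 7$ guarantee that $\|a-b\|_\infty\in\{2,\ldots,7\}$ for \emph{every} shift $r_k$, so the lattice path $-r_k+\gamma^k_{x+r_k,y+r_k}$ from $x$ to $y$ is well defined and has length at most $10\cdot 3^k$ by \eqref{eq:length}. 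Distributing the required series conductance along this path costs at most $10\cdot 3^k \cdot (2\cdot 3^k)^{-4}=\tfrac{5}{8}\cdot 3^{-3k}$ per edge per pair, whereas Definition \ref{def:9 and higher} adds $10\cdot 3^{-3k}$ to $U_k(e)$ per such pair. Summing over the chosen levels gives $\sum_k U_k(e)$ as an upper bound on the total long-range replacement, with a factor of roughly $16$ to spare.

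Accumulating everything, each nearest-neighbor edge receives at most $U(e)$ conductance in the replaced network, and the unit of original conductance on length-$1$ edges of $c$ fits comfortably inside this slack. The main technical obstacle will be the case of diagonal nearest-neighbor edges, for which $W(e)=0$, so the entire domination must come from $\sum_k U_k(e)$; verifying this requires a careful count of how many pairs $(x,y)$ have their replacement paths $\gamma^k$ crossing a given diagonal edge through diagonally oriented canonical shortest paths between midpoints. Once the edge-wise bound $U(e)\geq $ (total replacement contribution on $e$) is in place, Rayleigh's monotonicity together with the shorting law yields the desired comparison of effective conductances.
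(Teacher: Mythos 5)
Your plan is essentially the paper's proof: for each pair $(x,y)$ with $2\le\|x-y\|_\infty\le 8$ you replace the long edge by a series string along $\gamma'_{x,y}$, for each longer pair by a series string along $-r_k+\gamma^k_{x+r_k,y+r_k}$ at a suitable level $k$, you short the intermediate vertices onto the lattice, apply the parallel law, and check that the extra conductance needed per edge and per pair (at most $\|x-y\|_1\cdot\|x-y\|_\infty^{-4}\le \tfrac{1}{4}$, respectively at most $10\cdot 3^k\cdot(2\cdot 3^k)^{-4}=\tfrac{5}{8}\cdot 3^{-3k}$ using \eqref{eq:length}) is dominated by the increment ($16$, respectively $10\cdot 3^{-3k}$) that this very pair contributes along the same path in Definitions \ref{def:2 to 8} and \ref{def:9 and higher}. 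Your deterministic choice of level, $2\cdot 3^k\le\|x-y\|_\infty\le 7\cdot 3^k$, which makes the box condition hold for every shift $r_k$, is a clean simplification of the paper's case distinction between levels $k-1$ and $k$, and these computations are correct.

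The step you leave open, however, cannot be closed in the way you propose. For a diagonal nearest-neighbor edge $e$ the original network carries conductance $1$ and $W(e)=0$, but no counting argument will show that $\sum_k U_k(e)$ deterministically exceeds this unit: one has $U_k(e)=10\cdot 3^{-3k}N^k_{e+r_k}$, which vanishes whenever the shifted edge $e+r_k$ lies on no canonical segment between midpoints of consecutive levels and on no path between level-$k$ midpoints, an event of probability bounded away from $0$ for every $k$; and when $e+r_k$ lies on a level-$l$ segment with $l$ small compared to $k$, the count behind Lemma \ref{lem:numbers} gives $N^k_{e+r_k}=O(3^{2k+2l})$, hence $U_k(e)=O(3^{2l-k})$. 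Since the shifts at different levels are independent, with positive probability $U_k(e)=0$ for all $k$ up to a large $K$ while the remaining terms sum to less than $1$, so the edgewise bound you aim for on diagonal edges is simply false. The cure is different and cheap: edgewise domination on diagonal edges is not needed. Treat the original unit conductance of a diagonal edge $\{u,u+(1,1)\}$ like a long edge, replacing it by a series string of two edges of conductance $2$ through $u+(1,0)$ and shorting onto the lattice; this adds only a bounded constant to each orthogonal edge, and since every orthogonal edge has $W(e)\ge 16$ (for instance the pair $(u,u+(2,0))$ routes $\gamma'$ through $\{u,u+(1,0)\}$) while all replacement costs consume at most a fraction $\tfrac{1}{16}$ of the corresponding increments, the original unit conductances of both orthogonal and diagonal edges fit into the slack of $W$. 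This rerouting is in fact also what is needed to complete the paper's own write-up, which treats only edges of length at least $2$ and is silent about the original length-$1$ conductances.
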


\begin{proof}
	A non-nearest-neighbor edge $e =\{u,v\}$ is not included in the network defined by $U$. However, we have increased the conductances along some path connecting $u$ and $v$, when we consider the sum $W+\sum_{k=1}^{\infty} U_k$. In the following, we will show that for each edge $e =\{u,v\}$, the conductances indeed were increased at least once along a nearest-neighbor path connecting $u$ and $v$, and this increase of the conductances of the short edges actually increased the total conductivity of the network. A similar argument for the latter claim was also used in \cite{berger2002transience}.
	Assume that $e=\{u,v\}$ is an edge with length at least $9$, and let $k\in \{2,3,\ldots \}$ be such that $3^k \leq \|u-v\|_\infty < 3^{k+1}$. Say that $u+r_{k-1} \in V_a^{3^{k-1}}, v+r_{k-1} \in  V_b^{3^{k-1}}$. If $2 \leq \|a-b\|_\infty \leq 7$, we deleted the edge $\{u,v\}$ (with conductance $\|u-v\|_\infty^{-4} \leq 3^{-4k}$), but increased the conductance of nearest-neighbor edges along the path $-r_{k-1} + \gamma_{x+r_{k-1},y+r_{k-1}}^{k-1}$ by $10 \cdot 3^{-3(k-1)}$. The path $-r_{k-1} + \gamma_{x+r_{k-1},y+r_{k-1}}^{k-1}$ has a length of at most $10 \cdot 3^{k-1}$ by \eqref{eq:length}, and thus we increased the total conductivity of the network. To see this, assume we have a nearest-neighbor path of length $N = 10\cdot 3^{k-1}$ connecting $u$ and $v$. The edge $\{u,v\}$ is actually equivalent to a string of $N$ edges in series, each with conductance $N c_{\{u,v\}}$. Identifying the vertices in this string with the vertices in the original path in the nearest-neighbor lattice can only increase the conductivity of the network. Then applying the parallel law with the edges in the original lattice and the newly formed edges is equivalent to adding a conductance of $N c_{\{u,v\}}$ to each edge in the path connecting $u$ and $v$. As $N c_{\{u,v\}} \leq 10\cdot3^{k-1} 3^{-4k} \leq 10 \cdot 3^{-3(k-1)}$, this increased the total conductivity of the network.
	
	If $u,v$ with $3^k \leq \|u-v\|_\infty < 3^{k+1}$ are not such that $u +r_{k-1} \in V_a^{3^{k-1}}, v +r_{k-1} \in V_b^{3^{k-1}}$ with $a,b\in \Z^2$ and $2 \leq \|a-b\|_\infty \leq 7$, we already must have that $\|u-v\|_\infty > 6 \cdot 3^{k-1} = 2 \cdot 3^k$. 
	Thus, there exist $a^\prime, b^\prime \in \Z^2$ with $2\leq \|a^\prime - b^\prime \|_\infty \leq 7$ such that $u +r_{k} \in V_{a^\prime}^{3^{k}}, v +r_{k} \in V_{b^\prime}^{3^{k}}$. The same argument as before shows that we also increased the total conductivity in this case. 
	
	For edges $e=\{u,v\}$ with $\|u-v\|_\infty \leq 8$ we increase the conductances of the short edges along the path $\gamma^\prime_{x,y}$ by $16$. As $\gamma^\prime_{x,y}$ has a length of at most $\|x-y\|_1\leq 16$, we also increased the conductivity of the network for this case.
\end{proof}

\begin{lemma}\label{lem:cauchy}
	Fix an orientation $\overset{\to}{\nu} \in \left\{ \diagdown \ , \ \diagup \ , \ \mid \ , \ \-- \right\}$. Then for all edges $e$ of this orientation, $U(e)$ is identically distributed and has a Cauchy tail. Thus, by Lemma \ref{lem:noam}, the random walk on the network $\left(\Z^2, E(\Z^2), U\right)$ is almost surely recurrent.
\end{lemma}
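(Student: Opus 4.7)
The lemma has two claims to establish: (i) for each fixed orientation, the random weights $U(e)$ are identically distributed across edges $e$ of that orientation, and (ii) this common distribution has a Cauchy tail. I will address these in turn.

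For (i), I would first observe that $W$ is deterministic and the paths $\gamma'_{x,y}$ of Definition \ref{def:2 to 8} are translation-covariant ($\gamma'_{x+v,y+v} = \gamma'_{x,y} + v$), so $W(e)$ depends only on the orientation of $e$. For the random part, Definition \ref{def:9 and higher} can be rewritten as $U_k(e) = 10 \cdot 3^{-3k}\, N^k_{e+r_k}$, where $N^k$ is the counting function of Definition \ref{def:numbers} and $r_k$ is uniform on $\{0,\ldots,3^k-1\}^2$. The nested partition into the sets $V_a^{3^l}$ with $l \le k$ is invariant under translations by $3^k\Z^2$, so $\gamma^k_{x+3^kv,\,y+3^kv} = \gamma^k_{x,y} + 3^k v$ and $e' \mapsto N^k_{e'}$ is $3^k$-periodic. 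Therefore, if $e_1 = e_2 + v$ are two edges of the same orientation, the substitution $r_k \mapsto r_k + v \pmod{3^k}$ preserves the uniform measure and gives $U_k(e_1) \stackrel{d}{=} U_k(e_2)$; the independence of $(r_k)_k$ upgrades this to joint equality in law, hence $U(e_1) \stackrel{d}{=} U(e_2)$.

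For (ii), set $V_k := U_k(e)$. From Lemma \ref{lem:numbers} I would extract
\[
\mathbb{P}\bigl(V_k \ge 500 \cdot 3^{2l-k}\bigr) \le 3^{1-l}\text{ for }l \in \{0,\ldots,k-1\}, \qquad V_k \le 10 \cdot 2^{17} \cdot 3^k\text{ a.s.},
\]
which interpolates to the sub-Cauchy estimate $\mathbb{P}(V_k \ge s) \le C/\sqrt{3^k s}$ on the relevant range and vanishes once $s \gg 3^k$. Since by the paper's remark it suffices to verify the Cauchy tail at $t = C' \cdot 3^j$, I would decompose
\[
\mathbb{P}\Bigl(\textstyle\sum_k V_k \ge t\Bigr) \le \mathbb{P}\bigl(\max_k V_k \ge t/2\bigr) + \mathbb{P}\Bigl(\textstyle\sum_k V_k\, \mathbbm{1}_{\{V_k \le t/2\}} \ge t/2\Bigr).
\]
The first term is bounded by $\sum_k \mathbb{P}(V_k \ge t/2)$; only indices $k$ with $3^k \gtrsim t$ contribute (since $V_k \lesssim 3^k$), and the geometric sum of $C/\sqrt{3^k t}$ over those $k$ is $O(1/t)$. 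For the second term I would invoke Chebyshev using the independence of $(V_k)_k$: the truncated mean is $O(\log t)$ while the truncated variance is $\sum_k \mathbb{E}\bigl[V_k^2\,\mathbbm{1}_{\{V_k \le t/2\}}\bigr] = O(t)$ (the indices $3^k \lesssim t$ contribute $\sum 3^k = O(t)$, and the indices $3^k \gtrsim t$ contribute $\sum t^{3/2}/\sqrt{3^k} = O(t)$), yielding $O(1/t)$. Since $W(e)$ is bounded, $U(e) = W(e) + \sum_k V_k$ inherits the Cauchy tail, and Lemma \ref{lem:noam} concludes.

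The main obstacle I anticipate is the Chebyshev step. A naive Markov estimate on the truncated sum only produces $O(\log t/t)$, because $\mathbb{E}[V_k] = O(1)$ uniformly in $k$ forces the truncated mean to grow logarithmically in $t$. The passage to the true Cauchy decay $O(1/t)$ depends crucially on the independence of the shifts $(r_k)_k$ together with the refined second-moment bound $\mathbb{E}\bigl[V_k^2\,\mathbbm{1}_{\{V_k \le t/2\}}\bigr] = O\bigl(\min(3^k,\; t^{3/2}\cdot 3^{-k/2})\bigr)$, whose geometric sum is $O(t)$ with no logarithmic loss.
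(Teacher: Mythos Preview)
Your argument is correct, and part (i) matches the paper's almost verbatim. Part (ii), however, proceeds along a genuinely different route. The paper never invokes independence of the $(r_k)_k$ for the tail estimate; instead it sets, for each $k\ge j$, the graded threshold $3^{\,j+(j-k)/2}$, observes that $\sum_{k\ge j}3^{\,j+(j-k)/2}\le 3\cdot 3^{j}$ deterministically, and then applies a straight union bound:
\[
\p\Bigl(\sum_{k}V_k > C_2 3^{j}\Bigr)\le \sum_{k\ge j}\p\bigl(V_k>3^{\,j+(j-k)/2}\bigr)\le \sum_{k\ge j}C\,3^{-(k + j + (j-k)/2)/2}=O(3^{-j}).
\]
This uses nothing beyond the pointwise estimate $\p(V_k\ge s)\le C/\sqrt{3^{k}s}$ and the almost-sure bound $V_k\le C\,3^{k}$. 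Your max/truncated-sum/Chebyshev decomposition reaches the same $O(1/t)$ conclusion, but at the price of computing truncated second moments and genuinely using the independence of the shifts; the paper's threshold trick buys a shorter argument that would survive even if the $r_k$ were coupled, while your approach is the more portable template for sums of independent heavy-tailed variables.
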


\begin{proof}
	As $W, U_1, U_2, \ldots$ are independent, it suffices to show that the distribution of $W(e)$, respectively $U_k(e)$, depends only on the orientation of the edge $e$. This is clear for $W$, as the value $W(e)$ depends only on the orientation of the edge $e$. Remember that we say that $\gamma_{x+r_k,y+r_k}^k$ exists, when $x+r_k \in V_a^{3^k}, y+r_k \in V_b^{3^k}$ for $a,b\in \Z^2$ with $2\leq \|a-b\|_\infty \leq 7$. For $U_k$, note that $U_k(e)$ depends only on the number of pairs $(x,y)$ for which $e \in -r_k +\gamma_{x+r_k,y+r_k}^k$, and for which $\gamma_{x+r_k,y+r_k}^k$ exists. More precisely, $U_k(e)$ is simply $10\cdot 3^{-3k}$ times the number of pairs $(x,y)$ for which $e \in -r_k +\gamma_{x+r_k,y+r_k}^k$, and for which $\gamma_{x+r_k,y+r_k}^k$ exists. However, we have that
	\begin{align}\label{eq:calc}
		\notag \left|\left\{(x,y) : e \in -r_k +\gamma_{x+r_k,y+r_k}^k\right\}\right|
		& =
		\left|\left\{(x,y) : e + r_k \in \gamma_{x+r_k,y+r_k}^k\right\}\right|\\
		&
		=
		\left|\left\{(x,y) : e + r_k \in \gamma_{x,y}^k\right\}\right| = N_{e+r_k}^k \text,
	\end{align}
	where we write $\{u,v\}+r_k = \{u+r_k,v+r_k\}$ for an edge $e=\{u,v\}$.
	The quantity $N_{e}^k$ is clearly $3^k$-periodic in both coordinate directions. As $r_k$ is uniformly chosen on $\{0,\ldots, 3^k-1\}^2$, we see that the distribution of $N_{e+r_k}^k$, and thus also of $U_k(e)$, depends only on the orientation of the edge $e$. 
	
	Now let us turn to the tail properties of the random variable $U(e)$. $W(e)$ is uniformly bounded over all $e$, so we can ignore it from here on. From \eqref{eq:bound2numbers} and \eqref{eq:calc} we get that there exists a uniform constant $C<\infty$ such that
	\begin{align*}
		U_k(e) = N_{e+r_k}^k \cdot \left(10 \cdot 3^{-3k}\right) \leq C 3^{k}
	\end{align*}
	and for  $l\in \{0,\ldots, k-1\}$ we get with \eqref{eq:bound1numbers} that
	\begin{align*}
		\p \left(U_k(e) \geq 500 \cdot  3^{2l-k} \right) = \p \left(N_{e+r_k}^k \geq 50 \cdot 3^{2l+2k} \right) \leq \frac{3^{2k-l+1}}{3^{2k}} = 3^{-l+1},
	\end{align*}
	where we used the uniform distribution of $r_k$ and \eqref{eq:bound1numbers} for the last inequality. Using $j=2l-k$ and solving this for $l=\frac{k+j}{2}$, we get that there exists a constant $C<\infty$ such that for all $j \in \{-k,-k+2,\ldots, k-2\}$
	\begin{align}\label{eq:ineq Cauchy1}
	\p \left(U_k(e) \geq 500 \cdot 3^j \right) \leq C 3^{-\frac{k+j}{2}}.
	\end{align}
	We want to extend this inequality from $j \in \{-k,-k+2,\ldots, k-2\}$ to $j\in \{-k,-k+2,\ldots, k-2\}$.
	The extension from  $j\in \{-k,-k+2,\ldots, k-2\} $ to $j \in \left[-k,k\right]$ is easily doable by increasing the constant $C$ and looking at the nearest integers in the set $\{-k,-k+2,\ldots, k-2\}$. For $j<-k$ and $C\geq 1$ there is nothing to show, so \eqref{eq:ineq Cauchy1} holds trivially in this regime. Furthermore one has
	\begin{align*}
		\p\left(U_k(e) > 2^{17}10\cdot  3^{k} \right) =
		\p\left(N_{e+r_k}^k > 2^{17}  3^{4k} \right) \overset{\eqref{eq:bound2numbers}}{=} 0
	\end{align*}
	which shows that \eqref{eq:ineq Cauchy1} also holds for $j\geq k$ and a large enough constant $C$. Finally, as inequality \eqref{eq:ineq Cauchy1} holds for all $j\in \R$ with a high enough constant $C$, by further increasing the constant we can make sure that 
	\begin{align}\label{eq:ineq Cauchy}
	\p \left(U_k(e) \geq 3^j \right) \leq C 3^{-\frac{k+j}{2}}.
	\end{align}
	for all $j\in \R$.
	Also note that for $j\ll k$ inequality \eqref{eq:ineq Cauchy} gives that $\p \left(U_k(e) \geq 3^j \right) \leq C 3^{-\frac{k+j}{2}} \ll 3^{-j}$. We want to use this observation in order to show that $\sum_{k=1}^{\infty} U_k(e)$ has a Cauchy tail. Note that if we have $U_k(e) \leq 3^{j + \frac{j-k}{2}}$ for all $k \geq j \in \N$, then we also have that
	\begin{align*}
		\sum_{k=j}^{\infty} U_k(e) \leq \sum_{k=j}^{\infty} 3^{j + \frac{j-k}{2}} 
		= 3^j \sum_{k=j}^{\infty} 3^{\frac{j-k}{2}} \leq 
		3^j \sum_{k=0}^{\infty} 3^{\frac{-k}{2}} \leq 3 \cdot 3^{j}.
	\end{align*}
	As we furthermore have $U_k(e) \leq C_1 3^{k}$ for a large enough constant $C_1$ and all $k\in \N$, we get that
	\begin{align*}
	\sum_{k=1}^{\infty} U_k(e) = \sum_{k=1}^{j-1} U_k(e) + 
	\sum_{k=j}^{\infty} U_k(e) 
	\leq \sum_{k=1}^{j-1} C_1 3^k + \sum_{k=j}^{\infty} 3^{j + \frac{j-k}{2}} 
	\leq C_1 3^j + 3 \cdot 3^{j} = C_2 3^j
	\end{align*}
	for $C_2 = C_1 + 3$. Using the previous arguing in the reverse direction, we see that the event $\left\{\sum_{k=1}^{\infty} U_k(e) > C_2 3^j \right\}$ implies that there exists a $k\geq j$ with $U_k(e) > 3^{j + \frac{j-k}{2}}$.  Using this observation and combining it with a union bound, we get that
	\begin{align*}
		&\p \left( \sum_{k=1}^{\infty} U_k(e) > C_2 3^j \right) 
		\leq 
		\p \left( U_k(e) > 3^{j + \frac{j-k}{2}} \text{ for a } k\geq j \right)
		\leq
		\sum_{k=j}^{\infty}
		\p \left( U_k(e) > 3^{j + \frac{j-k}{2}} \right)\\
		&
		\overset{\eqref{eq:ineq Cauchy}}{\leq } 
		\sum_{k=j}^\infty C 3^{-\frac{k+ j + \frac{j-k}{2}}{2}}
		= C 3^{-\frac{3}{4}j} \sum_{k=j}^\infty  3^{-\frac{k}{4}} 
		=  C 3^{-\frac{3}{4}j} 3^{-\frac{j}{4}} \sum_{k=0}^\infty  3^{-\frac{k}{4}} \leq 5C \cdot 3^{-j}
	\end{align*}
	which shows that $\sum_{k=1}^{\infty} U_k(e)$ has a Cauchy tail and thus finishes the proof.
\end{proof}

\begin{remark}
	Using the definition of $U_k$, one can easily show that $\p \left(U_k(e) \geq 3^k \right) \approx  3^{-k}$, so \eqref{eq:ineq Cauchy} is approximately an equality for $k=j$. This already implies that 
	\begin{align*}
		 \p \left( \sum_{k=1}^{\infty} U_k(e) \geq  3^j \right) \geq 
		 \p \left(  U_j(e) \geq  3^j \right) \approx 3^{-j}
	\end{align*}
	which shows together with Lemma \ref{lem:cauchy} that the tail of $U$ is approximately that of a Cauchy distribution, i.e., $\p\left(U(e)> M\right) \approx M^{-1}$ for $M$ large.
\end{remark}

\section{Random walks on percolation clusters}\label{subsec:rw on perco}

In this section, we prove Theorem \ref{theo:rw}, i.e., that random walks on certain percolation clusters are recurrent. 
In section \ref{subsec:rcm} below we apply this result to the weight-dependent random connection model. From Theorem \ref{theo:rw} we can deduce the following corollary.

\begin{corollary}\label{coro:recurr}
	Let $d\in \{1,2\}$ and let $\left(\Z^d , E ,\omega\right)$ be the complete graph on $\Z^d$ where each edge $\{x,y\}\in E$ carries a random weight $\omega(\{x,y\})$ satsifying $\E \left[\omega(\{x,y\})\right] \leq C\|x-y\|^{-2d}$ for a constant $C< \infty$ and all pairs of points $x,y \in \Z^d$. Then the random walk on $\left(\Z^d , E,\omega\right)$ is recurrent almost surely.
\end{corollary}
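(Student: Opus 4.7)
The corollary should fall out almost immediately by composing the two main theorems of the paper, so the plan is essentially to assemble the bookkeeping.

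First, I would reduce to the deterministic network with conductances $c_{\{x,y\}} = C\|x-y\|^{-2d}$. By the hypothesis we have $\mathbb{E}[\omega(\{x,y\})] \leq c_{\{x,y\}}$ for all pairs $x,y \in \mathbb{Z}^d$. Before invoking Theorem \ref{theo:rw} with this choice, I need to check that the deterministic network $(\mathbb{Z}^d, E, (c_{\{x,y\}}))$ satisfies the standing assumption $0 < \sum_{e : v \in e} c_e < \infty$ at every vertex. This is immediate, since for $d \in \{1,2\}$ we have $\sum_{y \neq x} \|x-y\|^{-2d} < \infty$ (in $d=1$ the tail is $\sum k^{-2}$; in $d=2$ the number of lattice points at distance $\sim r$ is $O(r)$, giving $\sum r \cdot r^{-4}$).

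Second, I would apply Lemma \ref{lem:inher} (or, equivalently, Theorem \ref{theo:recurrence} after an easy normalization as in the proof of Lemma \ref{lem:inher}) to conclude that the random walk on the deterministic network with conductances $c_{\{x,y\}} = C\|x-y\|^{-2d}$ is recurrent, since these conductances satisfy precisely the hypothesis $c_{\{x,y\}} \leq C\|x-y\|^{-2d}$ of that lemma.

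Finally, I would invoke Theorem \ref{theo:rw}: given that the deterministic network $(\mathbb{Z}^d, E, (c_{\{x,y\}}))$ is recurrent and $\mathbb{E}[\omega(e)] \leq c_e$ for every edge $e$, the random walk on $(\mathbb{Z}^d, E, \omega)$ defined by \eqref{eq:y transition} is recurrent almost surely. That is exactly the conclusion of the corollary.

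There is no substantive obstacle here; the corollary is a direct specialization, and the only thing to be careful about is that Theorem \ref{theo:rw} is stated for a graph where the deterministic weights already give a well-defined random walk, which I verified above. No independence of the weights $\omega(e)$ is needed, consistent with Theorem \ref{theo:rw}.
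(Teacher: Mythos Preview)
Your proposal is correct and follows essentially the same route as the paper: one applies Theorem~\ref{theo:recurrence} (or Lemma~\ref{lem:inher}) to see that the deterministic network with $c_{\{x,y\}}=C\|x-y\|^{-2d}$ is recurrent, and then invokes Theorem~\ref{theo:rw} to pass to the random weights $\omega$. The additional check that $\sum_{e:v\in e}c_e<\infty$ is a nice bit of care but is not needed beyond what the paper already implicitly uses.
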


For dimension $d\in \{1,2\}$ and for the complete graph on $\Z^d$ with inclusion probabilities $c_{\{x,y\}} = \|x-y\|^{-2d}$ Corollary \ref{coro:recurr} extends a result of Berger on recurrence of the random walk on long-range percolation clusters \cite[Theorem 1.4]{berger2002transience}.
There are two differences between Corollary \ref{coro:recurr} and \cite[Theorem 1.4]{berger2002transience}. The first is that\cite[Theorem 1.4]{berger2002transience} only deals with the case where $\omega \in \{0,1\}^E$, whereas $\omega \in \R_{\geq 0}^E$ in our situation. The second difference is that Corollary \ref{coro:recurr} does {\sl not} require that the inclusion of edges is independent, whereas \cite[Theorem 1.4]{berger2002transience} requires independence. To deduce this corollary from Theorem \ref{theo:rw}, note that Theorem \ref{theo:recurrence} (respectively Lemma \ref{lem:inher}) shows that the random walk on conductances $\left(c_{\{x,y\}}\right)_{x,y \in \Z^d}$ with $c_{\{x,y\}} \leq C \|x-y\|^{-2d}$ is recurrent in dimension $d\in \{1,2\}$. Theorem \ref{theo:rw} thus implies that the random walk on a percolation cluster with weight distributions $\E\left[\omega(\{x,y\})\right] \leq C \|x-y\|^{-2d}$ is recurrent. 

Theorem \ref{theo:rw} will be a direct consequence of Lemma \ref{lem:conductances inequality} below. 
For two disjoint finite sets $\emptyset \neq A,B \subset V$ we write $\ce\left(A \leftrightarrow B;\omega\right)$ for the effective conductance between these two sets in the environment $\omega$, which is the environment in which each edge $e$ has the conductance $\omega(e)$. Note that $\ce\left(A \leftrightarrow B;\omega\right)$ is a random variable that is measurable with respect to $\omega$. We also write $\ce \left(A \leftrightarrow B\right)$ for the effective conductance between $A$ and $B$ in the environment where each edge $e$ has conductance $c_e$. For a vertex $a\in V$ we simply write $a$ for the set $\{a\}$. Furthermore, we write $\ce \left(a \leftrightarrow \infty\right)$ for the limit $\lim_{n\to \infty }\ce \left(a \leftrightarrow A_n^C\right)$, where $(A_n)_n$ is a sequence with $a\in A_n$ for all $n$ and $A_n \nearrow V$.

\begin{lemma}\label{lem:conductances inequality}
	Let $A,B \subset V$ be non-empty and disjoint subsets of $V$ such that $V\setminus(A\cup B)$ is finite. Assume that $\E\left[\omega(e)\right]\leq c_e$ for all edges $e\in E$. Then
	\begin{align}\label{eq:conductances inequality}
		\E \left[\ce\left(A \leftrightarrow  B;\omega\right)\right] \leq \ce\left(A \leftrightarrow  B\right)\text.
	\end{align}
\end{lemma}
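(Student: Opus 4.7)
The plan is to exploit a classical fact: the effective conductance between two fixed sets is a concave function of the edge weights. Once this is established, the lemma follows from Jensen's inequality together with Rayleigh's monotonicity principle.

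More precisely, by the Dirichlet principle one can write, for any non-negative weights $(w_e)_{e \in E}$,
\begin{equation*}
\ce\left(A \leftrightarrow B; w\right) = \inf_{f \in \mathcal{F}} \sum_{e = \{x,y\} \in E} w_e \bigl(f(x) - f(y)\bigr)^2 ,
\end{equation*}
where $\mathcal{F}$ is the set of functions $f : V \to \mathbb{R}$ satisfying $f \equiv 1$ on $A$ and $f \equiv 0$ on $B$. The assumption that $V \setminus (A \cup B)$ is finite ensures this variational problem is finite dimensional and well-posed. For each fixed $f \in \mathcal{F}$, the right-hand side is a \emph{linear} functional in $(w_e)_{e \in E}$, so $\ce(A \leftrightarrow B;\,\cdot\,)$, as an infimum of linear functionals, is concave in the edge weights.

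Applying Jensen's inequality to the random weights $\omega = (\omega(e))_{e \in E}$ gives
\begin{equation*}
\E\left[ \ce\left(A \leftrightarrow B; \omega\right) \right] \leq \ce\left(A \leftrightarrow B; \E[\omega]\right) ,
\end{equation*}
where $\E[\omega]$ denotes the deterministic weight vector $(\E[\omega(e)])_{e \in E}$. By the hypothesis $\E[\omega(e)] \leq c_e$ for every edge $e$, Rayleigh's monotonicity principle then yields
\begin{equation*}
\ce\left(A \leftrightarrow B; \E[\omega]\right) \leq \ce\left(A \leftrightarrow B; c\right) = \ce\left(A \leftrightarrow B\right) ,
\end{equation*}
and chaining these two inequalities gives \eqref{eq:conductances inequality}.

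The only step that requires some care is justifying Jensen's inequality for this concave, possibly unbounded functional. Since $V\setminus(A\cup B)$ is finite, the infimum in the Dirichlet principle is attained on a compact set of finitely many harmonic values and is a finite concave function on $\mathbb{R}_{\geq 0}^{E(V\setminus(A\cup B))}$; edges with both endpoints in $A$ or both in $B$ contribute zero and can be ignored, while edges with exactly one endpoint outside $A\cup B$ appear only finitely often per such vertex, so the dependence of $\ce(A\leftrightarrow B;w)$ on any individual $w_e$ is linearly bounded. This integrability, combined with the straightforward concavity observation above, is enough to invoke Jensen in its standard form and complete the proof.
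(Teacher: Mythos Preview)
Your proof is correct and follows essentially the same route as the paper: both arguments rest on Dirichlet's principle and the concavity of the effective conductance in the edge weights. The paper simply unpacks your Jensen step as the elementary inequality $\E\bigl[\inf_f \sum_e \omega(e)(df(e))^2\bigr] \le \inf_f \E\bigl[\sum_e \omega(e)(df(e))^2\bigr]$ and then swaps sum and expectation by non-negativity, which avoids any discussion of infinite-dimensional Jensen; indeed, the paper remarks just before the proof that concavity is being used implicitly.
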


Let us first see how this implies Theorem \ref{theo:rw}.

\begin{proof}[Proof of Theorem \ref{theo:rw} given Lemma \ref{lem:conductances inequality}]
	Let $a \in V$ be a vertex. Our goal is to show that the random walk started at $a \in V$ is recurrent. Let $\eps >0$ be arbitrary. As the random walk on the conductances $\left(c_{\{x,y\}}\right)_{x,y \in V}$ is recurrent, there exists a finite set $\Lambda_\eps \subset V$ such that $a \in \Lambda_\eps$ and $\ce\left(a \leftrightarrow  \Lambda_\eps^C\right) < \eps$. Then $V\setminus(\{a\}\cup \Lambda_\eps^C) = \Lambda_\eps \setminus\{a\}$ is finite and we can apply Lemma \ref{lem:conductances inequality}; this lemma already implies that
	\begin{align*}
		\E \left[ \ce\left(a \leftrightarrow  \Lambda_\eps^C; \omega \right) \right] \leq \ce\left(a \leftrightarrow  \Lambda_\eps^C\right) < \eps,
	\end{align*}
	and as $\ce\left(a \leftrightarrow  \infty ; \omega \right) \leq \ce\left(a \leftrightarrow  \Lambda_\eps^C; \omega \right)$ this already gives that
	\begin{align*}
	\E \left[ \ce\left(a \leftrightarrow  \infty ; \omega \right) \right]  < \eps.
	\end{align*}
	As $\eps>0$ was arbitrary and $\ce\left(a \leftrightarrow \infty ; \omega \right)$ is a non-negative random variable this already implies that $\ce\left(a \leftrightarrow \infty ; \omega \right) = 0$ almost surely, which is equivalent to saying that the random walk on the weights $\left(\omega(e)\right)_{e \in E}$ started at $a\in V$ is recurrent almost surely. As $a\in V$ was arbitrary, this finishes the proof.
\end{proof}

Lemma \ref{lem:conductances inequality} shows that the expected conductance always decreases if we say that an edge $e$ with conductance $c_e >0$ now carries a conductance of $\omega(e)$ with $\E\left[\omega(e)\right]\leq c_e$. This inequality might also be strict in many natural examples, despite the fact that the expected conductance over this edge stays the same. The reason why this inequality holds is ultimately linked to the fact that the effective conductance is a concave function over the individual conductances. In the proof of Lemma \ref{lem:conductances inequality} below the concavity is used implicitly, as the infimum over a set of linear functions is a concave function.

\begin{proof}[Proof of Lemma \ref{lem:conductances inequality}]
	We use Dirichlet's principle for the effective conductance, see for example \cite[Exercise 2.13]{lyons2017probability}. It says that for two non-empty disjoint sets $A,B \subset V$ for which $|V\setminus (A\cup B)|<\infty$ the effective conductance between these two sets can be expressed as
	\begin{align*}
		\ce \left(A \leftrightarrow B\right) = \inf_{f \in \cF} \sum_{e \in E} c_e \left(df(e)\right)^2,
	\end{align*}
	where $\cF$ is the set of functions $f$ from $V$ to $\R$ that are $+1$ on $A$ and $0$ on $B$. For an edge $e=\{x,y\}$ we write $\left(df(e)\right)^2 = (f(x)-f(y))^2$ for the squared difference of the values of $f$ at the endpoints of the edge. This is well-defined, even without fixing an orientation for the edge. Dirichlet's principle also holds for $\ce \left(A \leftrightarrow B; \omega\right)$. Thus we get that
	\begin{align*}
		\E \left[ \ce \left(A \leftrightarrow B; \omega\right) \right]
		&
		= \E \left[\inf_{f \in \cF} \sum_{e \in E} \omega(e) \left(df(e)\right)^2  \right]
		\leq
		\inf_{f \in \cF}
		\E \left[ \sum_{e \in E} \omega(e) \left(df(e)\right)^2  \right]
		\\
		&
		=
		\inf_{f \in \cF}
		\sum_{e \in E}
		\E \left[  \omega(e)   \right] \left(df(e)\right)^2
		\leq
		\inf_{f \in \cF}
		\sum_{e \in E} c_e \left(df(e)\right)^2
		=
		 \ce \left(A \leftrightarrow B\right)
	\end{align*}
	where we can interchange the sum and the expectation as all summands are non-negative. The change of the infimum and the expectation is always allowed when putting the inequality. Using this inequality for $A=\{a\}$ and $B=\Lambda^C$ finishes the proof.
\end{proof}

\subsection{Recurrence for the weight-dependent random connection model}\label{subsec:rcm}

In this section, we prove Theorem \ref{theo:rcm1}, i.e., different phases of recurrence for the two-dimensional weight-dependent random connection model.
Our main tool for proving this is a comparison to {\sl dependent} percolation on the two-dimensional integer lattice in Lemma \ref{lem:rcm1} below. A slightly weaker statement was already proven in \cite[Lemma 4.1]{gracar2022recurrence}, where the condition \eqref{eq:rcm1} needed to hold with $|x-y|^4$ replaced by $|x-y|^\alpha$ for some $\alpha>4$. This improvement allows us to prove the results of Theorem \ref{theo:rcm1}. Lemma \ref{lem:rcm1} is a direct consequence of Corollary \ref{coro:recurr}. 

\begin{lemma}\label{lem:rcm1}
	Let $\mathbf{X}_\infty$ be a unit intensity Poisson process on $\R^2$. Consider a random graph $\mathcal{H}$ on this point process, where points $x,y \in \mathbf{X}_\infty = V(\mathcal{H})$ are joined by an edge
	with conditional probability $P_{x,y}$, given $\mathbf{X}_\infty$. If
	\begin{align}\label{eq:rcm1}
	\sup_{x,y} |x-y|^4 P_{x,y} < \infty
	\end{align} 
	then any infinite component of $\mathcal{H}$ is recurrent.
\end{lemma}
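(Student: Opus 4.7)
The plan is to reduce the continuum problem to the lattice setting handled by Corollary~\ref{coro:recurr}. I will partition $\R^2$ into unit boxes $Q_z = z + [0,1)^2$ indexed by $z\in\Z^2$, and for each pair $z\neq z'\in\Z^2$ introduce the random weight
\[
\omega_{zz'} \;=\; \#\bigl\{\{x,y\}\in E(\mathcal{H}) : x\in Q_z,\, y\in Q_{z'}\bigr\}.
\]
I will verify the decay condition $\E[\omega_{zz'}] \le C'|z-z'|^{-4}$ required by Corollary~\ref{coro:recurr} with $d=2$, and then use a shorting (Rayleigh monotonicity) argument to transfer recurrence of the lattice network on $\Z^2$ back to every infinite component of $\mathcal{H}$.

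For the decay bound I will use the deterministic estimate $P_{x,y}\le C|x-y|^{-4}$ coming from \eqref{eq:rcm1} together with the second-order Campbell (Mecke) formula for the unit-intensity Poisson process:
\[
\E[\omega_{zz'}] \;=\; \E\!\!\sum_{\substack{x\neq y\in\mathbf{X}_\infty \\ x\in Q_z,\, y\in Q_{z'}}} P_{x,y} \;\le\; C\int_{Q_z}\!\int_{Q_{z'}} |x-y|^{-4}\, dx\, dy.
\]
When $|z-z'|$ is large enough that $|x-y|\ge \tfrac{1}{2}|z-z'|$ for all $x\in Q_z$, $y\in Q_{z'}$, this integral is $O(|z-z'|^{-4})$, as needed. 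For neighboring boxes the integrand is no longer integrable, so I will instead combine the crude bound $P_{x,y}\le 1$ on the region $\{|x-y|\le C^{1/4}\}$ with the tail bound $P_{x,y}\le C|x-y|^{-4}$ on its complement; a direct computation shows that $\int_{Q_z\times Q_{z'}} \min(1, C|x-y|^{-4})\, dx\, dy$ is bounded by a universal constant, which suffices because $|z-z'|^{-4}$ is bounded below for $z\neq z'\in\Z^2$.

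For the final step, I will regard $\mathcal{H}$ as an electric network on $\mathbf{X}_\infty$ with unit conductance on each open edge and short all Poisson points inside each box $Q_z$ into a single super-vertex; the resulting network on $\Z^2$ has precisely the conductances $\omega_{zz'}$ (edges internal to a box become self-loops and are irrelevant for random walk behavior). Since shorting can only decrease effective resistance from any vertex to infinity, recurrence of the lattice network --- which follows almost surely from Corollary~\ref{coro:recurr} once the decay bound is in hand --- forces $\ce(v\leftrightarrow\infty)=0$ for every $v\in\mathbf{X}_\infty$, and hence every infinite component of $\mathcal{H}$ is recurrent. The main technical nuisance is controlling the integral for adjacent boxes, where the $|x-y|^{-4}$ singularity demands the crude bound $P_{x,y}\le 1$; the conceptual step is the shorting reduction, which is standard but indispensable.
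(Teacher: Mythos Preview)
Your proposal is correct and follows essentially the same route as the paper: discretize into unit boxes, short each box to a lattice vertex, bound the expected conductance between lattice sites by $C\|z-z'\|^{-4}$, and invoke Corollary~\ref{coro:recurr}. The one place you make life harder than necessary is the adjacent-box case: since $P_{x,y}\le 1$ always and $\|z-z'\|^{-4}\ge 1$ whenever $z\neq z'$ are nearest neighbours in $\Z^2$, the pointwise bound $P_{x,y}\le C\|z-z'\|^{-4}$ holds for \emph{all} $x\in Q_z$, $y\in Q_{z'}$ with a single constant $C$ (large enough to also absorb the factor coming from $|x-y|\ge c\|z-z'\|$ at long range). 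This lets you skip the integral splitting and the Mecke formula entirely: $\E[\omega_{zz'}]\le C\|z-z'\|^{-4}\cdot\E\big[|\mathbf{X}_\infty\cap Q_z|\cdot|\mathbf{X}_\infty\cap Q_{z'}|\big]=C\|z-z'\|^{-4}$, which is exactly how the paper proceeds.
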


Note that Lemma \ref{lem:rcm1} does not make any assumptions on the independence of different edges. In particular, for the proof of Theorem \ref{theo:rcm1}, we will also require the statement to hold for dependent percolation models.

\begin{proof}[Proof of Lemma \ref{lem:rcm1}]
	We prove this via a discretization. We construct a weighted graph $G=\left(\Z^2, E,\omega\right)$ as follows. 
	For each $v\in \Z^{2}$, identify all vertices in $\mathbf{X}_\infty \cap \left(v+\left[0,1\right)^2\right)$ to one vertex $v$, which we also imagine to be at the position $v\in \Z^2$ in space. For some $u,v\in \Z^2$, if there are $m \geq 1$ edges between $u$ and $v$, replace them by one edge of conductance $m$, i.e., $\omega(\{u,v\})=m$. If there is no edge between two vertices $u,v\in \Z^2$ in the graph $G$, we set $\omega(\{u,v\})=0$. Call this new graph $G$. It is not hard to see that if every connected component of $G$ is recurrent, then also every connected component of $\mathcal{H}$ is recurrent. Indeed, joining vertices is equivalent to giving each edge between them a conductance of $+ \infty$, and thus we increase the total conductivity of the network by Raleigh's monotonicity principle. In a second step, we then applied the parallel law to possible parallel edges.
	So we are left with showing that every connected component of $G$ is recurrent. Assumption \eqref{eq:rcm1} implies that there exists a constant $C<\infty$ such that for all $u \neq v$ and for all $x \in u +\left[0,1\right)^2, y \in v +\left[0,1\right)^2$ one has $P_{x,y} \leq C\|u-v\|^{-4}$. Therefore for each edge $e=\{u,v\} \in E$ one now has
	\begin{align*}
		\E\left[\omega\left(\{u,v\}\right)\right] &=
		\E\left[
		\sum_{x \in \mathbf{X}_\infty \cap( u+\left[0,1\right)^2)} \ 
		\sum_{y \in \mathbf{X}_\infty \cap( v+\left[0,1\right)^2)}
		P_{x,y}
		\right]\\
		&
		\leq
		\E\left[
		\sum_{x \in \mathbf{X}_\infty \cap (u+\left[0,1\right)^2)} \ 
		\sum_{y \in \mathbf{X}_\infty \cap( v+\left[0,1\right)^2)}
		\right]
		C \|u-v\|^{-4}
		=
		C \|u-v\|^{-4}
	\end{align*}
	where we used that the Poisson process has a unit intensity in the last equality. This already implies that the random walk on every connected component of $G$ is recurrent, by Corollary \ref{coro:recurr}.
\end{proof}

Before going to the proof of Theorem \ref{theo:rcm1}, we still need to prove a small technical lemma that we will use later.

\begin{lemma}\label{lem:small eps tail}
	Suppose that $X$ is a non-negative random variable satisfying $\p \left(X \leq \eps\right) \leq C \eps$ for some constant $C< \infty$ and all $\eps>0$. Then for $\eta < 1$ one has
	\begin{align}\label{eq:small rv moment}
		\E \left[X^{-\eta}\right] < \infty
	\end{align}
	and for $\eta > 1$ one has
	\begin{align}\label{eq:small rv blowup}
	\E \left[X^{-\eta} | X \geq \eps \right] = \mathcal{O} \left( \eps^{1-\eta} \right)
	\end{align}
	as $\eps$ goes to $0$.
\end{lemma}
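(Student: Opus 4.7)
The plan is to apply the layer-cake (tail-integration) formula $\E[Y] = \int_0^\infty \p(Y > t)\,dt$ to $Y = X^{-\eta}$ (possibly restricted to the event $\{X \geq \eps\}$) and then exploit the hypothesis $\p(X \leq \eps) \leq C\eps$ to control the resulting tail of $X^{-\eta}$. The key observation is that for $t > 0$ and $X > 0$ we have $X^{-\eta} > t \iff X < t^{-1/\eta}$, so
\begin{equation*}
\p\bigl(X^{-\eta} > t\bigr) = \p\bigl(X < t^{-1/\eta}\bigr) \leq C\, t^{-1/\eta},
\end{equation*}
and this bound is useful precisely when $t \geq C^{\eta}$; for smaller $t$ we simply use the trivial bound $1$.

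For \eqref{eq:small rv moment}, I would write
\begin{equation*}
\E[X^{-\eta}] = \int_0^\infty \p\bigl(X^{-\eta} > t\bigr)\,dt \leq \int_0^{C^\eta} 1\,dt + \int_{C^\eta}^\infty C\, t^{-1/\eta}\,dt.
\end{equation*}
The first integral equals $C^\eta$, and the second is finite because $1/\eta > 1$ when $\eta < 1$, giving a finite value of $\E[X^{-\eta}]$.

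For \eqref{eq:small rv blowup}, I would first note that $\p(X \geq \eps) \geq 1 - C\eps \geq 1/2$ for all $\eps$ sufficiently small, so it suffices to show $\E[X^{-\eta}\mathbf{1}_{X \geq \eps}] = \mathcal{O}(\eps^{1-\eta})$. Using the layer-cake formula again, and observing that the event $\{X^{-\eta} > t,\, X \geq \eps\}$ is empty whenever $t \geq \eps^{-\eta}$ (for then $X < t^{-1/\eta} \leq \eps$), the integration is truncated at $\eps^{-\eta}$:
\begin{equation*}
\E\bigl[X^{-\eta}\mathbf{1}_{X \geq \eps}\bigr] \leq \int_0^{\eps^{-\eta}} \min\bigl(1, C\, t^{-1/\eta}\bigr)\,dt \leq C^\eta + C\int_{C^\eta}^{\eps^{-\eta}} t^{-1/\eta}\,dt.
\end{equation*}
Now $\eta > 1$ means $1 - 1/\eta > 0$, so the antiderivative $\tfrac{\eta}{\eta-1}\, t^{1-1/\eta}$ is increasing and evaluating at the upper limit $\eps^{-\eta}$ yields a contribution of order $\eps^{-\eta(1-1/\eta)} = \eps^{1-\eta}$, which dominates the constant $C^\eta$ as $\eps \to 0$. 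This gives the claimed bound.

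There is no real obstacle here — the proof is essentially a careful application of the tail formula together with the correct truncation of the integral (at $C^\eta$ to handle the crossover between the trivial bound and the hypothesis, and, in the conditional case, at $\eps^{-\eta}$ to use the conditioning event). The only mild subtlety is remembering that $\eta < 1$ makes the integral of $t^{-1/\eta}$ converge at infinity (needed for the first part) while $\eta > 1$ makes $t^{1-1/\eta}$ grow like a positive power (needed for the second part), so the same computation yields opposite qualitative behaviors in the two regimes.
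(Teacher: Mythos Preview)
Your proof is correct and follows essentially the same approach as the paper: both use the tail formula for $\E[X^{-\eta}]$, convert $\{X^{-\eta}>t\}$ into $\{X<t^{-1/\eta}\}$, apply the hypothesis, and truncate at $\eps^{-\eta}$ in the conditional case. The only cosmetic difference is that the paper uses sums $\sum_n \p(X^{-\eta}\geq n)$ while you use the integral version, and the paper bounds the conditional probability directly while you bound the restricted expectation and then divide by $\p(X\geq\eps)\geq 1/2$; neither difference is substantive.
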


\begin{proof}
	To prove \eqref{eq:small rv moment} note that
	\begin{align*}
		\E \left[X^{-\eta}\right] 
		\leq \sum_{n=0}^{\infty} \p \left(X^{-\eta} \geq n\right)
		= 1 + \sum_{n=1}^{\infty} \p \left(X \leq n^{-\frac{1}{\eta}}\right)
		\leq 1+ \sum_{n=1}^{\infty} C n^{-\frac{1}{\eta}} < \infty
	\end{align*}
	as $\frac{1}{\eta} > 1$. To show \eqref{eq:small rv blowup} note that for small enough $\eps$ one has $\p\left(X\geq \eps\right)\geq 0.5$ and this implies that for all $\tilde{\eps} \geq \eps$ one has 
	\begin{align*}
		\p \left(X \leq \tilde{\eps} | X\geq \eps\right) = 
		\frac{\p \left(X \leq \tilde{\eps} , X\geq \eps\right)}{\p \left( X\geq \eps\right)}
		\leq
		\frac{\p \left(X \leq \tilde{\eps}\right)}{0.5} \leq 2C\tilde{\eps}.
	\end{align*}
	For	$\tilde{\eps} < \eps$ one obviously has $\p \left(X \leq \tilde{\eps} | X\geq \eps\right) = 0$.
	As $\eta>1$, this implies that
	\begin{align*}
		\E \left[X^{-\eta} | X \geq \eps \right] &
		\leq 1 + \sum_{n=1}^{\infty} \p\left(X^{-\eta} \geq n | X \geq \eps\right)
		=
		1 + \sum_{n=1}^{\infty} \p\left(X \leq n^{-\frac{1}{\eta}} | X \geq \eps\right)\\
		&
		=
		1 + \sum_{n=1}^{\lceil \eps^{-\eta} \rceil} \p\left(X \leq n^{-\frac{1}{\eta}} | X \geq \eps\right)
		\leq
		1 + \sum_{n=1}^{\lceil \eps^{-\eta} \rceil} 2Cn^{-\frac{1}{\eta}}
		\\
		&
		\leq C^\prime \lceil \eps^{-\eta} \rceil^{1-\frac{1}{\eta}} \leq 2 C^{\prime } \eps^{1-\eta}
	\end{align*}
	for some constant $C^\prime < \infty$ and $\eps$ small enough. This shows \eqref{eq:small rv blowup} and thus finishes the proof.
\end{proof}

With this, we are now ready to go the the proof of Theorem \ref{theo:rcm1}. Remember that the vertex set of the two-dimensional weight-dependent random connection model is a Poisson process of unit intensity on $\R^2 \times (0,1)$. So in particular if we condition that there is a point in this Process with spatial parameter $x\in \R^2$, the weight-parameter of this vertex is still uniformly distributed on the interval $(0,1)$. If we condition that there are two points in the Poisson process with spatial parameters $x$ and $y$, then the weight-parameters of these points are independent random variables that are uniformly distributed on $(0,1)$.

\begin{proof}[Proof of Theorem \ref{theo:rcm1}]
	Throughout the proof we will always assume that $S$ and $T$ are independent random variables that are uniformly distributed on $(0,1)$. For all cases of random-connection models considered in Theorem \ref{theo:rcm1} we will verify that \eqref{eq:rcm1} holds. For this we need to show that
	\begin{align}\label{eq:rcm2}
		P_{x,y} = \E \left[\rho\left( g(S,T) \|x-y\|^2 \right)\right] = \mathcal{O}\left(\|x-y\|^{-4}\right),
	\end{align}
	as $\|x-y\| \to \infty$. This already implies that all connected components are recurrent by Lemma \ref{lem:rcm1}. We will only do the case $\gamma>0$. The case $\gamma=0$ works analogously or is degenerate. The factor of $\frac{1}{\beta}$ in the kernel $g(S,T)$ does not change whether \eqref{eq:rcm2} holds or not, so we will just ignore it from here on and think of $\beta=1$.	We will show \eqref{eq:rcm2} for all cases appearing in Theorem \ref{theo:rcm1}.
	Assuming that \eqref{eq:profile fct} holds we directly get that $\rho(r) \leq C r^{-\delta}$ for a large enough constant $C< \infty$ and all $r\geq 0$. To strengthen this bound, note that we also have
	\begin{align}\label{eq:profile fct ineq}
	\rho(r) \leq C \left(\mathbbm{1}_{\left[0,1\right)}(r) + \mathbbm{1}_{\left[1,\infty\right)}(r) r^{-\delta} \right)
	\end{align}
	for a large enough constant $C<\infty$ and all $r\geq 0$, as $\rho(r) \in \left[0,1\right]$ for all $r\in \R_{\geq 0}$. Now let us turn to the individual cases.\\
	
	\noindent
	$(a)$ (Preferential attachment kernel): For $\gamma < \frac{1}{2}$ we will first determine the limiting behavior near $0$ of the distribution of $g(S,T) = \text{min}(S,T)^\gamma \text{max}(S,T)^{1-\gamma}$. For abbreviation we will write $\text{min} = \text{min}(S,T)$, $\text{max} = \text{max}(S,T)$, and $X=\text{min}^{\gamma} \text{max}^{1-\gamma}$. Let $n \in \N$ be arbitrary. Then we have that
	\begin{align}
		\notag \p \left(X \leq \frac{1}{2^n}\right) &
		=
		\p \left(\text{min}^\gamma \leq \frac{1}{2^n} \right)
		+
		\sum_{k=0}^{\infty} \p \left(\frac{1}{2^{n-k}} < \text{min}^{\gamma}  \leq  \frac{1}{2^{n-k-1}}, X \leq \frac{1}{2^{n}}
		\right)
		\\
		& \label{eq:rcm sum}
		\leq
		\p \left(\text{min}^\gamma \leq \frac{1}{2^n} \right)
		+
		\sum_{k=0}^{\infty} \p \left(\frac{1}{2^{n-k}} < \text{min}^{\gamma}  \leq  \frac{1}{2^{n-k-1}}, \text{max}^{1-\gamma} \leq \frac{1}{2^k}
		\right)
	\end{align}
	as $\text{min}^{\gamma} \text{max}^{1-\gamma} \leq \frac{1}{2^n}$ and $\text{min}^{\gamma} \geq \frac{1}{2^{n-k}}$ already imply $\text{max}^{1-\gamma} \leq \frac{1}{2^k}$. On the event where $\frac{1}{2^{n-k}} < \text{min}^{\gamma}  \leq  \frac{1}{2^{n-k-1}}$ and $\text{max}^{1-\gamma} \leq \frac{1}{2^k}$ we must have that 
	\begin{align*}
		2^{-\frac{n-k}{\gamma}} < \text{min} \leq \text{max} \leq 2^{-\frac{k}{1-\gamma}}
	\end{align*}
	which can only hold if $-\frac{n-k}{\gamma} < -\frac{k}{1-\gamma}$, which is equivalent to $k < (1-\gamma)n$. Thus, all addends in the sum \eqref{eq:rcm sum}  are equal to $0$ for $k \geq (1-\gamma)n$ and can be ignored. For every two non-negative real numbers $a$ and $b$ we have that 
	\begin{align*}
		\p \left(\min \leq a, \max \leq b\right) \leq \p \left(S\leq a, T\leq b\right) + \p \left(T\leq a, S\leq b\right) \leq 2ab.
	\end{align*}
	Inserting the previous observations into \eqref{eq:rcm sum} we can further calculate that
	\begin{align*}
	\p \left(X \leq \frac{1}{2^n}\right) &
	\leq
	\p \left(\text{min} \leq \frac{1}{2^\frac{n}{\gamma}} \right)
	+
	\sum_{k=0}^{\lfloor (1-\gamma )n \rfloor} \p \left( \text{min}  \leq  \frac{1}{2^{\frac{n-k-1}{\gamma}}}, \text{max} \leq \frac{1}{2^\frac{k}{1-\gamma}}
	\right)
	\\
	&
	\leq
	 2\frac{1}{2^\frac{n}{\gamma}} 
	+
	2
	\sum_{k=0}^{\lfloor (1-\gamma )n \rfloor}  \frac{1}{2^{\frac{n-k-1}{\gamma}}} \cdot  \frac{1}{2^\frac{k}{1-\gamma}}
	=
	2 \frac{1}{2^{\frac{n}{\gamma}}} 
	+
	2^{1+\frac{1}{\gamma}} \frac{1}{2^{\frac{n}{\gamma}}}  
	\sum_{k=0}^{\lfloor (1-\gamma )n \rfloor} 2^{\frac{k}{\gamma}-\frac{k}{1-\gamma}}
	\\
	&
	\leq
	C 2^{-\frac{n}{\gamma}} + C 2^{-\frac{n}{\gamma}} 2^{\frac{(1-\gamma )n }{\gamma}-\frac{(1-\gamma )n }{1-\gamma}}
	\leq 
	C 2^{-\frac{n}{\gamma}} + C 2^{-\frac{n}{\gamma}+\frac{(1-\gamma )n }{\gamma}-n}\\
	&
	\leq 
	C 2^{-\frac{n}{\gamma}} + C 2^{-2n} \leq 2C\cdot 2^{-2n}
	\end{align*}
	for a large enough constant $C$. We used that $\gamma < \frac{1}{2}$ which implies that $\frac{1}{\gamma}-\frac{1}{1-\gamma}>0$, and thus the sum $\sum_{k=0}^{\lfloor (1-\gamma )n \rfloor} 2^{\frac{k}{\gamma}-\frac{k}{1-\gamma}}$ is, up to a multiplicative constant, equal to its last addend $2^{\frac{\lfloor (1-\gamma )n \rfloor}{\gamma}-\frac{\lfloor (1-\gamma )n \rfloor}{1-\gamma}}$. This already shows that 
	\begin{equation*}
		\p\left( g(S,T) \leq \eps \right)
		=
		\p\left( \text{min}(S,T)^\gamma \text{max}(S,T)^{1-\gamma} \leq \eps \right)
		 \leq C^\prime \eps^2
	\end{equation*}
	for some constant $C^\prime< \infty$ and all $\eps >0$. Taking squares this also implies that
	\begin{equation}\label{eq:sqrt small}
	\p\left( g(S,T)^2 \leq \eps \right) = \p\left( g(S,T) \leq \sqrt{\eps} \right)
	\leq C^\prime \eps
	\end{equation}
	for all $\eps>0$. This is useful for us, as we can thus apply Lemma \ref{lem:small eps tail} to the random variable $g(S,T)^2$.
	Let $\rho$ be a profile function with $\limsup_{r\to \infty} r^{\delta}\rho(r) < \infty$ for some $\delta>2$. We still need to show \eqref{eq:rcm2}. By inequality \eqref{eq:profile fct ineq} we can assume that 
	\begin{align*}
		\rho(r) \leq  \mathbbm{1}_{\left[0,1\right)}(r) + \mathbbm{1}_{\left[1,\infty\right)}(r) r^{-\delta} ,
	\end{align*}
	where we drop the multiplicative constant in \eqref{eq:profile fct ineq} for the ease of notation.
	Using that $\frac{\delta}{2}>1$ by assumption, we get that for some constant $C < \infty$
	\begin{align*}
	P_{x,y} & = \E \left[\rho\left( g(S,T) \|x-y\|^2 \right)\right] \\
	&
	\leq 
	\p \left( g(S,T) \|x-y\|^2 < 1\right) + \E \left[\left( g(S,T) \|x-y\|^2 \right)^{-\delta} \ \big| \ g(S,T) \|x-y\|^2 \geq 1 \right]\\
	&
	\leq 
	\p \left( g(S,T)  < \frac{1}{\|x-y\|^2}\right) +  \|x-y\|^{-2\delta}  \E \left[ (g(S,T)^2)^{-\frac{\delta}{2}} \ \Big| \ g(S,T)^2 \geq \frac{1}{ \|x-y\|^4} \right]\\
	&
	\leq 
	C \frac{1}{\|x-y\|^4} + C \|x-y\|^{-2\delta} \left(\frac{1}{ \|x-y\|^4}\right)^{1-\frac{\delta}{2}} = \mathcal{O}\left(\|x-y\|^{-4}\right)
	\end{align*}
	which shows \eqref{eq:rcm2} and  finishes the proof. The last inequality holds because of Lemma \ref{lem:small eps tail} and  \eqref{eq:sqrt small}.\\
	
	\noindent
	$(b)$ (Min and sum kernel): We show the result for the min kernel. As the sum kernel and the min kernel differ only by a constant, this already implies that \eqref{eq:rcm2} also holds for the sum kernel. We start with the case $\delta =2,\gamma< \frac{1}{2}$. We can assume that $\rho(r)\leq C r^{-2}$ for a constant $C<\infty$ and thus we get that
	\begin{align*}
		P_{x,y} = \E\left[\rho\left(g(S,T)\|x-y\|^2\right)\right] &
		\leq C
		\|x-y\|^{-4}
		\E\left[\min(S,T)^{-2\gamma}\right]
		\leq
		C
		\|x-y\|^{-4}
		\E\left[S^{-2\gamma} T^{-2\gamma}\right]\\
		&
		=
		C
		\|x-y\|^{-4}
		\E\left[S^{-2\gamma} \right] \E\left[T^{-2\gamma}\right]
		=
		\mathcal{O} \left(\|x-y\|^{-4}\right)
	\end{align*}
	as $2\gamma< 1$ and thus $	\E\left[S^{-2\gamma} \right] , \E\left[T^{-2\gamma}\right] < \infty$. This finishes the proof for the first case. For the second case $\gamma = \frac{1}{2}, \delta > 2$ we ignore the constant in \eqref{eq:profile fct ineq} and will thus assume from here on that	
	\begin{align*}
	\rho(r) \leq  \mathbbm{1}_{\left[0,1\right)}(r) + \mathbbm{1}_{\left[1,\infty\right)}(r) r^{-\delta} .
	\end{align*}
	This implies that
	\begin{align*}
		P_{x,y} & = \E \left[\rho\left( g(S,T) \|x-y\|^2 \right)\right]
		\\
		&
		\leq
		\p \left( \min(S,T)^{\frac{1}{2}} \|x-y\|^2 < 1\right) + \E \left[\left( \min(S,T)^{\frac{1}{2}} \|x-y\|^2 \right)^{-\delta} \Big| \min(S,T)^{\frac{1}{2}} \|x-y\|^2 \geq 1 \right]\\
		&
		=
		\p \left( \min(S,T)  < \frac{1}{\|x-y\|^4}\right) + \|x-y\|^{-2\delta} \E \left[ \min(S,T)^{-\frac{\delta}{2}}  \Big| \min(S,T) \geq \frac{1}{\|x-y\|^4} \right]\\
		&
		\leq 
		\frac{2}{\|x-y\|^4}
		 + C \|x-y\|^{-2\delta} \left(\frac{1}{\|x-y\|^4} \right)^{1-\frac{\delta}{2}} 
		 =
		 \mathcal{O} \left(\frac{1}{\|x-y\|^4}\right)
	\end{align*}
	for some constant $C<\infty$. The last line holds because of Lemma \ref{lem:small eps tail}, as $\p\left(\min(S,T)\leq \eps\right) \leq 2 \eps$ and $\frac{\delta}{2}>1$. This finishes the proof for the min kernel.\\
	
	\noindent
	$(c)$ (Product kernel): Now let us turn to the product kernel $g(S,T)=S^\gamma T^\gamma$. Let $\gamma < \frac{1}{2}$ and $\delta=2$. We can assume that $\rho(r)\leq C r^{-2}$ and thus we get with the same argument as above that
	\begin{align*}
	P_{x,y} = \E\left[\rho\left(g(S,T)\|x-y\|^2\right)\right] &
	\leq 
	C
	\|x-y\|^{-4}
	\E\left[S^{-2\gamma} T^{-2\gamma}\right]
	=
	\mathcal{O} \left(\|x-y\|^{-4}\right)
	\end{align*}
	which finishes the proof.
\end{proof}


\end{document}